\documentclass[a4paper,12pt]{amsart}

\usepackage{amsmath,amssymb,amsthm,mathtools}
\usepackage{enumitem,needspace}
\usepackage{hyperref}
\usepackage[a4paper,textwidth=13.5cm,textheight=21cm,centering]{geometry}

\hypersetup{
  hidelinks,
  pdftitle={Closed Lindelof Sigma Kernels of Neighborhood Assignments in Charming Spaces},
  pdfauthor={Xing-Yu Hu},
  pdfsubject={General topology},
  pdfkeywords={charming space, D-space, neighborhood assignment, Lindelof Sigma-space, HFCw space, Hausdorff residue}
}

\newtheorem{theorem}{Theorem}[section]
\newtheorem{lemma}[theorem]{Lemma}
\newtheorem{proposition}[theorem]{Proposition}
\newtheorem{corollary}[theorem]{Corollary}
\theoremstyle{definition}
\newtheorem{definition}[theorem]{Definition}
\theoremstyle{remark}
\newtheorem{remark}[theorem]{Remark}

\numberwithin{equation}{section}

\newcommand{\Obs}{\mathsf{Obs}_{cc}}
\newcommand{\Core}{\mathsf{Core}_{cc}}
\newcommand{\doiref}[1]{doi:\nolinkurl{#1}}

\title[Closed assignment kernels in charming spaces]{Closed Lindel\"of $\Sigma$ Kernels of Neighborhood Assignments in Charming Spaces}
\author[X.-Y. Hu]{Xing-Yu Hu\textsuperscript{*}}
\thanks{\textsuperscript{*}Corresponding author: Xing-Yu Hu.}
\address{School of Mathematics and Statistics, Hanjiang Normal University, Shiyan, China}
\email{huxingyu@hjnu.edu.cn}
\subjclass[2020]{Primary 54D20; Secondary 54A25, 54D45, 54G12}
\keywords{charming space, $D$-space, neighborhood assignment, Lindel\"of $\Sigma$-space, $\mathrm{HFC}_{\mathrm w}$ space, Hausdorff residue}
\date{}

\begin{document}

\begin{abstract}
Every neighborhood assignment on a charming space $X$ has a countable kernel whose closure in $X$ is Lindel\"of $\Sigma$. For a neighborhood assignment $P$ on $X$ and a set $A\subseteq X$, write $P[A]=\bigcup_{a\in A}P(a)$. Consequently, for any two neighborhood assignments $M$ and $N$ on $X$, there is a closed discrete set $D\subseteq X$ such that $M[N[D]]=X$. The $D$-property also admits an exact one-step reduction to covering a fixed Lindel\"of $\Sigma$ kernel by assigned neighborhoods centered at an ambient closed discrete set. The condition that every neighborhood assignment $\varphi$ on $X$ admit a pointwise square refinement, meaning a neighborhood assignment $V$ on $X$ with $V[V(x)]\subseteq\varphi(x)$ for every $x\in X$, is exactly the classical $z$-space condition formulated in terms of normal neighbornets. For a continuous closed surjection $p:Z\to X$, a fixed neighborhood assignment $\varphi$ on $X$ has a pointwise square refinement on $X$ if and only if there is a neighborhood assignment $V$ on $Z$ such that $p(V[V(z)])\subseteq\varphi(p(z))$ for every $z\in Z$. Every uncountable weakly hereditarily finally covering ($\mathrm{HFC}_{\mathrm w}$) subspace of $[\omega_1]^{\leq\omega}$ has a closed countable kernel for every neighborhood assignment but is not charming. For each Lindel\"of $\Sigma$ kernel $Y$ of $X$, put $B=\overline Y^{\,X}$ and $H=\overline{B\setminus Y}^{\,B}$. The Hu--Yin boundary obstruction is then $B\setminus Y$. The space $X$ has a closed Lindel\"of $\Sigma$ kernel exactly when there is a Lindel\"of $\Sigma$ kernel $Y$ such that, with $B=\overline Y^{\,X}$ and $H=\overline{B\setminus Y}^{\,B}$, the set $H\cap Y$ is $G_\delta$ in $H$. For any fixed Lindel\"of $\Sigma$ kernel $Y$, the closed-core reduction can be iterated. At every stage, the current iterate is a $D$-space if and only if $X$ is a $D$-space. The iteration reaches the largest closed subspace in which the kernel trace and its complement are both dense, and it stabilizes before the successor of the hereditary Lindel\"of number of $Y$. This bound is sharp. For each infinite cardinal $\kappa$, every $\gamma<\kappa^+$ occurs as the first stabilization stage for a compact space with a dense $\sigma$-compact Lindel\"of $\Sigma$ kernel whose cardinality and hereditary Lindel\"of number are both $\kappa$. The general charming-space $D$-problem remains open.
\end{abstract}

\maketitle

\section{Introduction}

A neighborhood assignment on a space $X$ is a function $\varphi$ that assigns to each $x\in X$ an open neighborhood $\varphi(x)$ of $x$.
The space $X$ is a $D$-space if every neighborhood assignment has a closed discrete kernel, that is, a closed discrete set $D\subseteq X$ for which
\[
X=\bigcup_{d\in D}\varphi(d).
\]
These definitions occur in van Douwen and Pfeffer \cite[p.~371]{vanDouwenPfeffer1979}. The classical $D$-space problem asks whether every regular Lindel\"of space must be a $D$-space. In a preprint, Tsaban constructs under CH a regular hereditarily Lindel\"of non-$D$ space of cardinality $\aleph_1$ and states that the corresponding existence assertion is independent of ZFC\@. It remains open whether it is consistent that every regular hereditarily Lindel\"of space is a $D$-space \cite[version~3, Theorem~1.1 and the discussion following it]{Tsaban2026}. Gruenhage \cite{GruenhageDsurvey} surveys the classical problem. The narrower question is whether every charming space is a $D$-space.

Arhangel'ski\u{\i} introduced charming spaces as a remainder-type extension of Lindel\"of $\Sigma$-spaces. A space $X$ is charming if it contains a Lindel\"of $\Sigma$ subspace $Y$ such that $X\setminus U$ is Lindel\"of $\Sigma$ whenever $U$ is an open neighborhood of $Y$ in $X$. Such a subspace $Y$ is called a Lindel\"of $\Sigma$ kernel of $X$ \cite[p.~93]{Arhangelskii2011}. Subsequent structural work on charming spaces and related strengthened subclasses appears in \cite{CasarrubiasPaniagua2016,LiLinLin2017,HuZhang2025}.

Hu and Yin \cite[Theorems~3.15 and~3.36]{HuYin2026} reduced the charming-space $D$-problem first to the dense-kernel case and then to a closed subspace defined from the boundary of a kernel. For a Lindel\"of $\Sigma$ kernel $Y$ of a charming space $X$, a set that is closed discrete in $Y$ need not be closed in $X$, since it may accumulate at points of $\overline Y^{\,X}\setminus Y$. The construction developed here gives, for each neighborhood assignment on $X$, a countable kernel whose closure in $X$ is Lindel\"of $\Sigma$.

For a neighborhood assignment $P$ on $X$ and $A\subseteq X$, write $P[A]=\bigcup_{a\in A}P(a)$. Theorem~\ref{thm:closed-LSigma-kernel} states that every neighborhood assignment on a charming space has a countable kernel whose ambient closure is Lindel\"of $\Sigma$. If $E\subseteq Y$ is nonempty and closed discrete in $Y$, then $E$ is countable because $Y$ is Lindel\"of. Lemma~\ref{lem:closure-cd-LSigma} then invokes \cite[Proposition~3.28]{HuYin2026}, which states for such an $E$ that $\overline E^{\,X}$ is charming with $E$ as a countable open dense Lindel\"of $\Sigma$ kernel and $\overline E^{\,X}\setminus E$ Lindel\"of $\Sigma$. Countable-union stability then shows that $\overline E^{\,X}$ is Lindel\"of $\Sigma$. The closure of the resulting countable kernel is a closed Lindel\"of $\Sigma$ assignment kernel. Its Lindel\"of $\Sigma$ property permits the $D$-property to be applied inside it, and its closedness ensures that the resulting closed discrete set remains closed discrete in $X$. Together, these two properties give Corollary~\ref{cor:two-assignments}. For any two neighborhood assignments $M$ and $N$ on $X$, there is a closed discrete set $D\subseteq X$ with $M[N[D]]=X$. Theorem~\ref{thm:exact-criterion} reduces the remaining one-step problem to covering a fixed Lindel\"of $\Sigma$ kernel by neighborhoods of an ambient closed discrete set. The condition that every neighborhood assignment $\varphi$ on $X$ admit a pointwise square refinement, meaning a neighborhood assignment $V$ on $X$ with $V[V(x)]\subseteq\varphi(x)$ for every $x\in X$, is classical. In neighbornet language it is precisely the $z$-space condition of Hicks and Sharma \cite[p.~109]{HicksSharma1979}. The elementary equivalence between one- and two-assignment refinements, together with the classical theory, shows that on an uncountable $\sigma$-compact metrizable space not every neighborhood assignment admits such a refinement. A related fiber-saturation construction appears in the proof of \cite[Proposition~2.13]{KunziFletcher1985}. Theorem~\ref{thm:closed-lift} gives the fixed-assignment equivalence for continuous closed surjections.

Van Mill, Tkachuk and Wilson introduced dual classes with respect to neighborhood assignments \cite[Definition~2.1]{vanMillTkachukWilson2007}. Related work includes weak dual classes \cite{AlasTkachukWilson2006,XuanSong2019}, results on kernels with prescribed topological properties, including dually Lindel\"of $\Sigma$ spaces \cite{BuzyakovaTkachukWilson2007}, the NA-$\mathcal P$/expansion-operator formulation of Basile, Bonanzinga, Maesano and Shakhmatov \cite{BasileBonanzingaMaesanoShakhmatov2023}, and the subsequent closure-operator treatment of neighborhood-assignment and star classes, in which ordinary topological closure is explicitly one of the allowed closure operators \cite[Definition~2.1]{MartinezRuizRamirezParamo2025}. Since every charming space is Lindel\"of, every neighborhood assignment on it already has a countable kernel \cite[Corollary~2.9]{vanMillTkachukWilson2007}, which is Lindel\"of $\Sigma$ by \cite[Proposition~1.2]{KubisOkunevSzeptycki2006}. The closure of any assignment kernel is again an assignment kernel. Thus the use of ordinary closure in neighborhood-assignment duality is not itself new. Theorem~\ref{thm:closed-LSigma-kernel} adds the simultaneous conclusion that one may choose a countable kernel whose ambient closure remains Lindel\"of $\Sigma$. Requiring every assignment to admit a kernel that is both closed and countable is the \emph{dually closed countable} property studied by Yu and Yun \cite[Section~4]{YuYun2009}. In particular, their Question~4.1 asks whether every Lindel\"of space has that property \cite[Question~4.1]{YuYun2009}. The theorem does not assert that the countable kernel itself is closed. It gives a countable kernel whose ambient closure is Lindel\"of $\Sigma$ and hence a closed separable assignment kernel.

For an uncountable weakly hereditarily finally covering ($\mathrm{HFC}_{\mathrm w}$) subspace $Z\subseteq[\omega_1]^{\leq\omega}$, Theorem~\ref{thm:hfcw-charming} shows that every Lindel\"of $\Sigma$ subspace is countable, the closure in $Z$ of every countable subset is countable, $Z$ is not charming, and $Z$ is dually closed countable. In particular, the $\diamondsuit$-construction in version~2 of Tsaban's preprint is a subspace of $[\omega_1]^{\leq\omega}$ and is $\mathrm{HFC}_{\mathrm w}$ \cite[version~2, Sections~3 and~4]{Tsaban2026}. Hence it is dually closed countable but not charming, independently of its announced non-$D$ property.

The boundary reduction in \cite[Theorem~3.36]{HuYin2026} admits an exact description. For a Lindel\"of subspace $Y$ of a Hausdorff space $X$ and $B=\overline Y^{\,X}$, every open neighborhood of a point of $B\setminus Y$ has a trace on $Y$ that is not countably compact. Thus the obstruction set defined in \cite[Definition~3.30]{HuYin2026} is precisely $B\setminus Y$, and the closed core in the earlier equivalence is $\overline{B\setminus Y}^{\,B}$. Theorem~\ref{thm:CL-equivalence} also shows that the kernel condition in \cite[Conjecture~3.42]{HuYin2026} is equivalent to the existence of a closed Lindel\"of $\Sigma$ kernel. The boundary reduction is then iterated while the original kernel remains fixed. On the traces arising from the iterated closed-core reduction, the induced derivative is precisely the transfinite iteration of Hausdorff's residue operation. Section~\ref{sec:boundary-iteration} proves this identity. Hausdorff's residue operation is classical \cite[p.~281]{Hausdorff1914}. Kuratowski treats residues of transfinite order in \cite[Section~12]{Kuratowski1966}. Here the residue iteration is coupled to the charming-space closed-core reduction and the $D$-property. Theorem~\ref{thm:core-iteration} propagates the resulting $D$-space equivalence through arbitrary limit stages and gives a stabilization bound by the successor of the hereditary Lindel\"of number of $Y$. Theorem~\ref{thm:uncountable-core-rank} shows that this bound is sharp. For each infinite cardinal $\kappa$, every $\gamma<\kappa^+$ is the first stabilization stage for a compact space with a dense $\sigma$-compact Lindel\"of $\Sigma$ kernel whose cardinality and hereditary Lindel\"of number are both $\kappa$.

Throughout the paper, all spaces are Tychonoff unless a weaker separation assumption is explicitly stated. Lindel\"of $\Sigma$-spaces are closed under closed subspaces and countable unions \cite[p.~2576]{KubisOkunevSzeptycki2006}, and every Lindel\"of $\Sigma$-space is a $D$-space \cite[p.~494]{Buzyakova2002}.

\section{Preliminaries}\label{sec:preliminaries}

For a space $X$, a neighborhood assignment $\varphi$ on $X$, and a set $A\subseteq X$, put
\[
\varphi[A]=\bigcup_{a\in A}\varphi(a).
\]
Thus, if $M$ and $N$ are neighborhood assignments on $X$ and $D\subseteq X$, then
\[
M[N[D]]=\bigcup\{M(x):x\in N[D]\}.
\]

\begin{definition}
For a neighborhood assignment $\varphi$ on a space $X$, a set $D\subseteq X$ is a \emph{kernel} of $\varphi$ if $\varphi[D]=X$ \cite[p.~689]{BuzyakovaTkachukWilson2007}. The space $X$ is a $D$-space if every neighborhood assignment on $X$ has a kernel that is closed discrete in $X$ \cite[p.~371]{vanDouwenPfeffer1979}.
\end{definition}

By \cite[Proposition~1.2]{KubisOkunevSzeptycki2006}, a space $Z$ is Lindel\"of $\Sigma$ if and only if it admits a cover $\mathcal C$ by compact subspaces and a countable family $\mathcal N$ of subsets of $Z$ such that, whenever $C\in\mathcal C$ and $U$ is an open neighborhood of $C$ in $Z$, some $N\in\mathcal N$ satisfies $C\subseteq N\subseteq U$. Such a family $\mathcal N$ is called a network with respect to $\mathcal C$. In particular, every countable space is Lindel\"of $\Sigma$, by taking $\mathcal C=\mathcal N=\{\{z\}:z\in Z\}$.

\begin{definition}
A space $X$ is \emph{charming} if there is a Lindel\"of $\Sigma$ subspace $Y\subseteq X$ such that, for every open set $U$ with $Y\subseteq U\subseteq X$, the subspace $X\setminus U$ is Lindel\"of $\Sigma$. Such a $Y$ is called a \emph{Lindel\"of $\Sigma$ kernel} of $X$ \cite[p.~93]{Arhangelskii2011}.
\end{definition}

Every charming space is Lindel\"of \cite[Proposition~3.3]{Arhangelskii2011}.

The following elementary closed-discrete union lemma is used below.

\begin{lemma}[{\cite[Lemma~2.14]{HuYin2026}}]\label{lem:finite-union-cd}
Let $E$ and $F$ be disjoint closed discrete subsets of a space $X$. Then $E\cup F$ is closed discrete in $X$.
\end{lemma}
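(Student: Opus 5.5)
The plan is to prove the two halves of the conclusion separately: first that $E\cup F$ is closed, then that it is discrete. Both rely only on two facts. A finite union of closed sets is closed. A set $A$ is closed discrete in $X$ exactly when every point $x\in X$ has an open neighborhood $W$ with $|W\cap A|\le 1$; equivalently, $A$ has no accumulation point in $X$.

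\textbf{Closedness.} Since $E$ and $F$ are closed, so is $E\cup F$.

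\textbf{Discreteness.} I would use the accumulation-point formulation. Suppose some $x\in X$ is an accumulation point of $E\cup F$, so that every open neighborhood $W$ of $x$ meets $(E\cup F)\setminus\{x\}$. Because $E$ is closed discrete, there is an open $W_E\ni x$ with $W_E\cap E\subseteq\{x\}$. Likewise, there is an open $W_F\ni x$ with $W_F\cap F\subseteq\{x\}$. Then $W=W_E\cap W_F$ is an open neighborhood of $x$ with
\[
W\cap(E\cup F)\subseteq\{x\}.
\]
This contradicts the choice of $x$. Hence $E\cup F$ has no accumulation point in $X$, and it is therefore closed and discrete.

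Disjointness of $E$ and $F$ is not actually needed; the lemma holds for any two closed discrete sets, and the hypothesis is presumably included only to match how the lemma is applied later. There is no real obstacle. The only point requiring care is the local characterization of closed discrete sets. I would justify it directly. If $A$ has no accumulation point, then $A$ is closed, since every point outside $A$ has a neighborhood missing $A$. It is also discrete, since every $a\in A$ has a neighborhood meeting $A$ only in $a$. Conversely, if $A$ is closed and discrete, a point $x\notin A$ has the neighborhood $X\setminus A$, and a point $a\in A$ has an open set isolating it in $A$.
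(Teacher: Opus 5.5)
Your proof is correct. The paper gives no proof of this lemma and only cites \cite[Lemma~2.14]{HuYin2026}, so there is no in-paper argument to compare with. Your route is the standard one: for each point $x$, the neighborhood $W_E\cap W_F$ meets $E\cup F$ at most in $x$. Equivalently, the derived sets satisfy $(E\cup F)'=E'\cup F'=\varnothing$.

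You are also right that disjointness is not needed. Consequently, the disjointness checks carried out before invoking the lemma in the proofs of Theorem~\ref{thm:exact-criterion} and Corollary~\ref{cor:compact-residual} are harmless but unnecessary.

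One small caveat concerns your first formulation, that every point has a neighborhood $W$ with $|W\cap A|\le 1$. This characterizes closed discrete sets only in $T_1$ spaces. In the Sierpi\'nski space, the singleton of the open point satisfies it but is not closed. This is harmless here, since all spaces in the paper are Tychonoff. Moreover, the version you actually use, $W\cap A\subseteq\{x\}$, characterizes closed discrete sets in every topological space.
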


\begin{lemma}\label{lem:closed-subspace-cd}
Let $X$ be a space and let $K\subseteq X$ be closed. If $D$ is closed discrete in $K$, then $D$ is closed discrete in $X$.
\end{lemma}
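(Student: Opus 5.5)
The plan is to verify the two defining conditions directly: first that $D$ is closed in $X$, then that it is discrete in $X$. The only input needed is that $K$ is closed, together with the description of the topology of $K$ as the subspace topology. I will use the standard formulation that a set $D$ is closed discrete in a space exactly when $D$ is closed and every point of $D$ has an open neighborhood meeting $D$ only in that point. Equivalently, every point of the space has a neighborhood meeting $D$ in at most one point, and I may use whichever form is convenient.

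\emph{Closedness.} Since $D$ is closed in $K$, there is a closed set $F\subseteq X$ with $D=F\cap K$. Because $K$ is closed in $X$, the intersection $F\cap K$ is closed in $X$. Hence $D$ is closed in $X$.

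\emph{Discreteness.} Fix $d\in D$. Since $D$ is discrete in $K$, there is an open set $W$ of $K$ with $W\cap D=\{d\}$. Write $W=U\cap K$ with $U$ open in $X$. Then
\[
U\cap D=U\cap K\cap D=W\cap D=\{d\},
\]
because $D\subseteq K$. So $d$ is isolated in $D$ with respect to the topology of $X$. If the point-neighborhood formulation is preferred, handle points $x\in X\setminus K$ separately: the open set $X\setminus K$ contains $x$ and misses $D$ entirely. Points of $K$ are then handled by pulling back the witnessing neighborhood in $K$ as above.

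There is no genuine obstacle here. The only point requiring care is that closedness of $K$ is used exactly once, in the step showing $D$ is closed in $X$. Without it the conclusion fails: a convergent sequence in the open subspace $(0,1]\subseteq[0,1]$ with limit $0$ is closed discrete in $(0,1]$ but not closed in $[0,1]$.
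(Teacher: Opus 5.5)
Your proof is correct and matches the paper's argument: closedness of $D$ in $K$ plus closedness of $K$ gives closedness in $X$, and an open set of $K$ isolating $d$ is the trace of an open set of $X$, which then isolates $d$ from $D$ in $X$. The extra remarks (the point-neighborhood formulation and the $(0,1]\subseteq[0,1]$ example) are accurate but not needed.
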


\begin{proof}
The set $D$ is closed in $K$, hence closed in $X$. For $d\in D$, an open set in $K$ isolating $d$ is the trace of an open set of $X$, which therefore isolates $d$ from $D$ in $X$.
\end{proof}

Closed heredity of the $D$-property is classical \cite[p.~11]{BorgesWehrly1991}.

\begin{lemma}[{\cite[Lemma~3.33]{HuYin2026}}]\label{lem:closed-hereditary-D}
Every closed subspace of a $D$-space is a $D$-space.
\end{lemma}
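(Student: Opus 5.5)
The statement to prove is Lemma~\ref{lem:closed-hereditary-D}: every closed subspace of a $D$-space is a $D$-space. I would prove it directly by extending a neighborhood assignment from the closed subspace to the whole space and then cutting the resulting kernel back down.

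Let $X$ be a $D$-space, let $K\subseteq X$ be closed, and let $\psi$ be a neighborhood assignment on $K$.
\begin{enumerate}[label=(\arabic*)]
\item For each $x\in K$, choose an open set $W_x\subseteq X$ with $W_x\cap K=\psi(x)$.
\item Define a neighborhood assignment $\varphi$ on $X$ by $\varphi(x)=W_x$ for $x\in K$ and $\varphi(x)=X\setminus K$ for $x\notin K$. This is legitimate because $K$ is closed, so $X\setminus K$ is an open neighborhood of each point outside $K$.
\item Since $X$ is a $D$-space, there is a set $D\subseteq X$, closed discrete in $X$, with $\varphi[D]=X$.
\item Put $D_K=D\cap K$. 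Any point $y\in K$ lies in $\varphi(d)$ for some $d\in D$. Such a $d$ cannot lie outside $K$, since then $\varphi(d)=X\setminus K$ misses $y$. Hence $d\in D_K$, and $y\in W_d\cap K=\psi(d)$. Therefore $\psi[D_K]=K$.
\item The set $D_K$ is a subset of the closed discrete set $D$, so it is closed discrete in $X$. Being contained in $K$, it is also closed discrete in $K$: closedness passes to the subspace, and the isolating open sets restrict to $K$.
\end{enumerate}
Thus $\psi$ has a kernel that is closed discrete in $K$, so $K$ is a $D$-space.

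No step here is a real obstacle. The only point needing care is step (2), where the closedness of $K$ is exactly what makes $X\setminus K$ open, so that $\varphi$ is a genuine neighborhood assignment on $X$.
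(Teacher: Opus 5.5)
Your proof is correct and complete. The paper gives no argument of its own for this lemma; it cites Hu and Yin's Lemma~3.33 and notes that closed heredity of the $D$-property is classical (Borges--Wehrly). Your extension of $\psi$ by the constant neighborhood $X\setminus K$ off $K$, followed by intersecting the kernel with $K$, is the standard proof of that classical fact and needs no separation axioms.
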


\section{The exact one-step assignment problem}

The following elementary criterion reduces the $D$-problem to covering a fixed Lindel\"of $\Sigma$ kernel by neighborhoods of an ambient closed discrete set.

\Needspace{9\baselineskip}
\begin{theorem}\label{thm:exact-criterion}
Let $X$ be a charming space and fix a Lindel\"of $\Sigma$ kernel $Y$ of $X$. The following are equivalent.
\begin{enumerate}[label=\textup{(\roman*)}]
\item $X$ is a $D$-space.
\item For every neighborhood assignment $\varphi$ on $X$, there exists a set $E\subseteq X$ that is closed discrete in $X$ and satisfies
\[
Y\subseteq\varphi[E].
\]
\end{enumerate}
\end{theorem}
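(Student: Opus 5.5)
The implication (i)$\Rightarrow$(ii) is immediate: if $X$ is a $D$-space, then every neighborhood assignment $\varphi$ has a closed discrete kernel $E$. In particular $Y\subseteq X=\varphi[E]$.

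For (ii)$\Rightarrow$(i), fix a neighborhood assignment $\varphi$ on $X$ and use (ii) to choose a closed discrete set $E\subseteq X$ with $Y\subseteq\varphi[E]$. Put $U=\varphi[E]$. This is an open set with $Y\subseteq U\subseteq X$, so the definition of a Lindel\"of $\Sigma$ kernel makes $K=X\setminus U$ Lindel\"of $\Sigma$. It is also closed in $X$. We then work inside $K$. The restriction of $\varphi$ to $K$ gives the neighborhood assignment $\psi(x)=\varphi(x)\cap K$ for $x\in K$. Since every Lindel\"of $\Sigma$-space is a $D$-space \cite[p.~494]{Buzyakova2002}, there is a set $F\subseteq K$, closed discrete in $K$, with $K\subseteq\varphi[F]$. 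Because $K$ is closed in $X$, Lemma~\ref{lem:closed-subspace-cd} makes $F$ closed discrete in $X$.

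Finally, put $D=E\cup F$. Then $\varphi[D]\supseteq U\cup K=X$, so $D$ is a kernel. The sets $E$ and $F$ are disjoint: $F\subseteq K=X\setminus\varphi[E]$, and each $e\in E$ lies in $\varphi(e)\subseteq\varphi[E]$. Hence Lemma~\ref{lem:finite-union-cd} applies and shows that $D$ is closed discrete in $X$. (A union of two closed discrete sets is in any case closed discrete, but the disjointness lets us quote the lemma as stated.) This proves (i).

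There is no serious obstacle here. The only points that need care are applying the kernel definition to the open set $\varphi[E]\supseteq Y$ and making sure that closed discreteness in $K$ transfers to $X$, which uses that $K$ is closed.
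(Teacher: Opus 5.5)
Your proposal is correct and follows the paper's proof essentially step for step. Both arguments apply the kernel definition to the open set $\varphi[E]\supseteq Y$ and cover the closed Lindel\"of $\Sigma$ remainder using the $D$-property. Both then transfer closed discreteness via Lemma~\ref{lem:closed-subspace-cd} and combine the two disjoint closed discrete sets via Lemma~\ref{lem:finite-union-cd}.
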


\begin{proof}
Assume \textup{(i)} and let $\varphi$ be a neighborhood assignment on $X$. Choose a closed discrete kernel $E$ of $\varphi$. Then $\varphi[E]=X$, so in particular $Y\subseteq\varphi[E]$.

Conversely, assume \textup{(ii)} and let $\varphi$ be a neighborhood assignment on $X$. Choose a closed discrete $E\subseteq X$ with $Y\subseteq\varphi[E]$. Put
\[
R=X\setminus\varphi[E].
\]
The set $R$ is closed in $X$ and disjoint from $Y$. Since $\varphi[E]$ is an open neighborhood of the Lindel\"of $\Sigma$ kernel $Y$, the space $R$ is Lindel\"of $\Sigma$, and hence a $D$-space.

Restrict $\varphi$ to $R$ by
\[
\varphi_R(x)=\varphi(x)\cap R\qquad(x\in R).
\]
Choose a closed discrete $F\subseteq R$ such that $R=\varphi_R[F]\subseteq\varphi[F]$. By Lemma~\ref{lem:closed-subspace-cd}, $F$ is closed discrete in $X$. Also $E\cap R=\varnothing$, because every $e\in E$ belongs to $\varphi(e)\subseteq\varphi[E]$. Hence $E\cap F=\varnothing$. Lemma~\ref{lem:finite-union-cd} shows that $E\cup F$ is closed discrete in $X$, and
\[
X=\varphi[E]\cup R\subseteq\varphi[E\cup F].
\]
Thus $X$ is a $D$-space.
\end{proof}

\begin{remark}\label{rem:centers-outside}
The centers in Theorem~\ref{thm:exact-criterion}\textup{(ii)} are allowed to lie anywhere in $X$. Requiring $E\subseteq Y$ would impose an additional restriction and is not part of the equivalent formulation.
\end{remark}

\begin{corollary}\label{cor:compact-residual}
Let $X$ be a charming space and fix a Lindel\"of $\Sigma$ kernel $Y$ of $X$. The following are equivalent.
\begin{enumerate}[label=\textup{(\roman*)}]
\item $X$ is a $D$-space.
\item For every neighborhood assignment $\varphi$ on $X$, there is a closed discrete set $E\subseteq X$ such that $Y\setminus\varphi[E]$ is countably compact.
\item For every neighborhood assignment $\varphi$ on $X$, there is a closed discrete set $E\subseteq X$ such that $Y\setminus\varphi[E]$ is compact.
\end{enumerate}
\end{corollary}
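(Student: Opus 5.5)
The plan is to prove the cycle (i)$\Rightarrow$(iii)$\Rightarrow$(ii)$\Rightarrow$(i), using Theorem~\ref{thm:exact-criterion} for the last implication.

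For (i)$\Rightarrow$(iii), take a closed discrete kernel $E$ of $\varphi$. Then $Y\setminus\varphi[E]=\varnothing$, which is compact.

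For (iii)$\Rightarrow$(ii), note that every compact space is countably compact.

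For (ii)$\Rightarrow$(i), verify condition (ii) of Theorem~\ref{thm:exact-criterion}. Fix $\varphi$ and choose a closed discrete $E\subseteq X$ with $K=Y\setminus\varphi[E]$ countably compact. The set $K$ is closed in $Y$, since $\varphi[E]$ is open. Hence $K$ is Lindel\"of $\Sigma$, being a closed subspace of $Y$, and in particular Lindel\"of. A countably compact Lindel\"of space is compact, so $K$ is compact.

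It remains to cover $K$ by $\varphi$-neighborhoods of a closed discrete set disjoint from $E$. Let $Y'=\{y\in K: \text{no } e\in E \text{ has } y\in\varphi(e)\}$. Actually every $y\in K$ lies outside $\varphi[E]$, so $K\cap E=\varnothing$, because $e\in\varphi(e)$. By compactness, choose a finite $F\subseteq K$ with $K\subseteq\varphi[F]$. Then $F$ is finite, so it is closed discrete in the Hausdorff (Tychonoff) space $X$, and $F\cap E=\varnothing$. By Lemma~\ref{lem:finite-union-cd}, $E\cup F$ is closed discrete in $X$. Moreover $Y\subseteq\varphi[E]\cup K\subseteq\varphi[E\cup F]$. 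Theorem~\ref{thm:exact-criterion} then yields that $X$ is a $D$-space.

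The only mildly delicate step is upgrading countable compactness to compactness. It needs $K$ to be Lindel\"of, which comes from $K$ being closed in the Lindel\"of $\Sigma$ space $Y$ together with the closed-heredity of Lindel\"of $\Sigma$ recorded in the introduction. The other steps are routine bookkeeping.
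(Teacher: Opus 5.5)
Your proposal is correct and follows essentially the same argument as the paper. Both show that the residual $Y\setminus\varphi[E]$ is compact because it is closed in the Lindel\"of space $Y$. Both then cover it by finitely many $\varphi$-neighborhoods centered in the residual, which are therefore disjoint from $E$, and finish with Lemma~\ref{lem:finite-union-cd} and Theorem~\ref{thm:exact-criterion}. The abandoned definition of $Y'$ is never used and can simply be deleted.
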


\begin{proof}
Theorem~\ref{thm:exact-criterion} shows that \textup{(i)} implies both \textup{(ii)} and \textup{(iii)}, with empty residual. Conversely, assume \textup{(ii)} and let $\varphi$ be a neighborhood assignment on $X$. Let $E$ witness \textup{(ii)} for $\varphi$. The residual $C=Y\setminus\varphi[E]$ is closed in the Lindel\"of space $Y$, hence Lindel\"of. Since it is also countably compact, it is compact. By compactness, choose a finite set $F\subseteq C$ such that $C\subseteq\varphi[F]$. Since $E\subseteq\varphi[E]$ and $F\subseteq C=Y\setminus\varphi[E]$, the sets $E$ and $F$ are disjoint. The finite set $F$ is closed discrete in the Tychonoff space $X$. Lemma~\ref{lem:finite-union-cd} therefore shows that $E\cup F$ is closed discrete in $X$. Since $Y\subseteq\varphi[E\cup F]$, Theorem~\ref{thm:exact-criterion} gives \textup{(i)}. Thus \textup{(ii)} implies \textup{(i)}, and \textup{(iii)} implies \textup{(ii)}.
\end{proof}

\section{\texorpdfstring{Closed Lindel\"of $\Sigma$ assignment kernels}{Closed Lindelof Sigma assignment kernels}}

If $E\subseteq Y$ is nonempty, countable, and closed discrete in $Y$, Hu and Yin \cite[Proposition~3.28]{HuYin2026} show that $G=\overline E^{\,X}$ is a closed charming subspace with $E$ as a countable open dense Lindel\"of $\Sigma$ kernel and $G\setminus E$ Lindel\"of $\Sigma$. Since every closed discrete subspace of the Lindel\"of space $Y$ is countable, the following lemma combines \cite[Proposition~3.28]{HuYin2026} with the countable-union stability of Lindel\"of $\Sigma$-spaces \cite[p.~2576]{KubisOkunevSzeptycki2006}.

\Needspace{8\baselineskip}
\begin{lemma}\label{lem:closure-cd-LSigma}
Let $X$ be a charming space with Lindel\"of $\Sigma$ kernel $Y$. If $E\subseteq Y$ is closed discrete in $Y$, then $E$ is countable and
\[
G=\overline{E}^{\,X}
\]
is a closed Lindel\"of $\Sigma$ subspace of $X$.
\end{lemma}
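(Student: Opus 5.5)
The plan is to combine the cited Hu--Yin proposition with the countable-union stability of Lindel\"of $\Sigma$-spaces, after first showing that $E$ is countable and disposing of the trivial case $E=\varnothing$.

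\emph{Countability of $E$.} Every charming space is Lindel\"of \cite[Proposition~3.3]{Arhangelskii2011}, and $Y$ is Lindel\"of because it is Lindel\"of $\Sigma$. The set $E$ is closed in $Y$, so $E$ is Lindel\"of. It is also discrete, so the cover of $E$ by singletons is an open cover of $E$. A countable subcover exists, and therefore $E$ is countable. If $E=\varnothing$, then $G=\varnothing$, which is trivially a closed Lindel\"of $\Sigma$ subspace. From now on assume $E\neq\varnothing$.

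\emph{Applying the cited proposition.} Since $E$ is nonempty, countable and closed discrete in $Y$, \cite[Proposition~3.28]{HuYin2026} applies. It gives that $G=\overline E^{\,X}$ is a closed charming subspace of $X$ with the following two properties:
\begin{enumerate}[label=\textup{(\alph*)}]
\item $E$ is a countable open dense Lindel\"of $\Sigma$ kernel of $G$;
\item $G\setminus E$ is Lindel\"of $\Sigma$.
\end{enumerate}
The set $E$ is countable, so it is Lindel\"of $\Sigma$. To see this, take $\mathcal C=\mathcal N=\{\{e\}:e\in E\}$ in the characterization from \cite[Proposition~1.2]{KubisOkunevSzeptycki2006} recalled in the preliminaries.

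\emph{Conclusion.} We can write
\[
G=E\cup(G\setminus E),
\]
a union of two Lindel\"of $\Sigma$ subspaces. By the countable-union stability of Lindel\"of $\Sigma$-spaces \cite[p.~2576]{KubisOkunevSzeptycki2006}, $G$ is Lindel\"of $\Sigma$. It is closed in $X$ because it is a closure.

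\emph{Main obstacle.} All the substantive content is in \cite[Proposition~3.28]{HuYin2026}, in particular the fact that $G\setminus E$ is Lindel\"of $\Sigma$. If that conclusion had to be reproved, I would argue as follows. The set $E$ is open in $G$, because each point of $E$ is isolated in $E$ and $E$ is dense in $G$. Hence $G\setminus E$ is closed in $G$. To show $G\setminus E$ is Lindel\"of $\Sigma$, I would exhibit it as $X\setminus U$ intersected with $G$ for a suitable open neighborhood $U$ of $Y$ in $X$. The kernel property then makes $X\setminus U$ Lindel\"of $\Sigma$, and closed heredity passes this to the closed subset $G\setminus E$. Here $U$ should be an open set of $X$ that contains $Y$ and meets $G$ only in $E$. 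Building such a $U$ from the discreteness of $E$ in $Y$ is the delicate point, and it is the reason I defer to the cited result.
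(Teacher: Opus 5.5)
Your proposal is correct and follows the paper's proof essentially step for step. You use the Lindel\"of property of $Y$ for countability, dispose of the empty case, and then combine \cite[Proposition~3.28]{HuYin2026} with countable-union stability applied to $G=E\cup(G\setminus E)$.

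The step you call delicate is actually immediate from what you already observed. Since $E$ is open in $G$ and $G$ is closed in $X$, the set $G\setminus E$ is closed in $X$. It misses $Y$ because $G\cap Y=\overline E^{\,Y}=E$. So $U=X\setminus(G\setminus E)$ is the required open neighborhood of $Y$, and $G\setminus E=X\setminus U$ is Lindel\"of $\Sigma$ by the kernel property.
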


\begin{proof}
Since $Y$ is Lindel\"of, the closed discrete set $E$ is countable. If $E=\varnothing$, then $G=\varnothing$, so the conclusion holds. Assume $E\ne\varnothing$. By \cite[Proposition~3.28]{HuYin2026}, the space $G$ is closed in $X$, the set $E$ is a Lindel\"of $\Sigma$ subspace of $G$, and $G\setminus E$ is Lindel\"of $\Sigma$. Countable-union stability therefore gives that $G=E\cup(G\setminus E)$ is Lindel\"of $\Sigma$.
\end{proof}

\begin{theorem}\label{thm:closed-LSigma-kernel}
Let $X$ be a charming space. Every neighborhood assignment $M$ on $X$ has a countable kernel $A$ such that
\[
K=\overline A^{\,X}
\]
is Lindel\"of $\Sigma$. Consequently, $K$ is a closed separable Lindel\"of $\Sigma$ subspace of $X$ and a kernel for $M$.
\end{theorem}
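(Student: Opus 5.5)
We need to prove that every neighborhood assignment $M$ on a charming space $X$ has a countable kernel $A$ with $\overline A^{\,X}$ Lindel\"of $\Sigma$. Fix a Lindel\"of $\Sigma$ kernel $Y$ of $X$. The plan is to cover $Y$ by a set of centers inside $Y$ that is closed discrete in $Y$, so that Lemma~\ref{lem:closure-cd-LSigma} controls its closure. The remainder outside the resulting open set is then Lindel\"of $\Sigma$ by the charming property, and can be handled inside a closed Lindel\"of $\Sigma$ set.

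\textbf{Step 1 (covering $Y$).} Consider the assignment $M_Y(y)=M(y)\cap Y$ on $Y$. Since $Y$ is Lindel\"of $\Sigma$, it is a $D$-space, so there is a set $E\subseteq Y$, closed discrete in $Y$, with $Y=M_Y[E]\subseteq M[E]$. By Lemma~\ref{lem:closure-cd-LSigma}, $E$ is countable and $G=\overline E^{\,X}$ is a closed Lindel\"of $\Sigma$ subspace of $X$.

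\textbf{Step 2 (the remainder).} Put $U=M[E]$. This is an open set containing $Y$, so $R=X\setminus U$ is Lindel\"of $\Sigma$ by the definition of a Lindel\"of $\Sigma$ kernel, and $R$ is closed in $X$. Since $R$ is Lindel\"of, the restricted assignment $M_R(x)=M(x)\cap R$ has a countable kernel $F\subseteq R$ (alternatively, use the $D$-property of $R$). Then $R\subseteq M[F]$, and $\overline F^{\,X}\subseteq R$ because $R$ is closed. Hence $\overline F^{\,X}$ is a closed subspace of a Lindel\"of $\Sigma$ space and is therefore Lindel\"of $\Sigma$.

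\textbf{Step 3 (assembly).} Let $A=E\cup F$. This set is countable, and
\[
M[A]=M[E]\cup M[F]\supseteq U\cup R=X,
\]
so $A$ is a kernel of $M$. Moreover $\overline A^{\,X}=G\cup\overline F^{\,X}$, a union of two Lindel\"of $\Sigma$ subspaces, so it is Lindel\"of $\Sigma$ by countable-union stability. Finally, $K=\overline A^{\,X}$ is closed and contains the countable dense set $A$, hence is separable. It is a kernel for $M$ because $M[K]\supseteq M[A]=X$.

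\textbf{Main obstacle.} The only genuinely nontrivial input is the Lindel\"of $\Sigma$ property of $\overline E^{\,X}$. A set that is closed discrete in $Y$ may accumulate at points of $\overline Y^{\,X}\setminus Y$, and this is exactly what Lemma~\ref{lem:closure-cd-LSigma}, via \cite[Proposition~3.28]{HuYin2026}, handles. Because the centers $E$ are chosen inside $Y$ using the $D$-property of $Y$, that lemma applies directly, and the rest of the argument is bookkeeping.
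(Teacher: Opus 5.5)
Your proof is correct and follows the paper's route: you cover $Y$ by $M[E]$ with $E$ closed discrete in $Y$, control $\overline E^{\,X}$ by Lemma~\ref{lem:closure-cd-LSigma}, and handle the closed Lindel\"of $\Sigma$ remainder $R=X\setminus M[E]$. The only difference is bookkeeping: the paper forms the closed Lindel\"of $\Sigma$ kernel $L=G\cup R$ and extracts a countable subkernel $A\subseteq L$ from the Lindel\"of property of $X$, whereas you take $A=E\cup F$ directly, with $F$ a countable kernel of $M$ restricted to $R$.
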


\begin{proof}
Fix a Lindel\"of $\Sigma$ kernel $Y$ of $X$. Since $Y$ is a $D$-space, the restricted neighborhood assignment
\[
y\longmapsto M(y)\cap Y
\]
has a closed discrete kernel $E\subseteq Y$. Thus
\[
Y\subseteq M[E].
\]
By Lemma~\ref{lem:closure-cd-LSigma},
\[
G=\overline E^{\,X}
\]
is closed in $X$ and Lindel\"of $\Sigma$.

Set
\[
R=X\setminus M[E]
\qquad\text{and}\qquad
L=G\cup R.
\]
The set $R$ is closed in $X$, disjoint from $Y$, and Lindel\"of $\Sigma$ because $M[E]$ is an open neighborhood of $Y$. Hence $L$ is closed in $X$ and Lindel\"of $\Sigma$. Since $E\subseteq L$ and every $r\in R$ belongs to $M(r)$,
\[
X=M[E]\cup R\subseteq M[L].
\]
Thus $L$ is a kernel of $M$.

Since every charming space is Lindel\"of, the open cover $\{M(\ell):\ell\in L\}$ of $X$ has a countable subcover indexed by some countable set $A\subseteq L$. Then $M[A]=X$, so $A$ is a countable kernel. Put $K=\overline A^{\,X}$. Because $L$ is closed in $X$, one has $K\subseteq L$. Hence $K$ is a closed subspace of the Lindel\"of $\Sigma$ space $L$ and is therefore Lindel\"of $\Sigma$. Since $A$ is dense in $K$, the space $K$ is separable. As $A\subseteq K$ and $M[A]=X$, one has $M[K]=X$, so $K$ is a kernel for $M$.
\end{proof}

Because every charming space is Lindel\"of, every neighborhood assignment on it has a countable kernel \cite[Corollary~2.9]{vanMillTkachukWilson2007}. The closure of any assignment kernel is again a kernel, so every assignment also has a closed kernel. Theorem~\ref{thm:closed-LSigma-kernel} combines these two conclusions by choosing a countable kernel whose ambient closure remains Lindel\"of $\Sigma$. This does not imply the \emph{dually closed countable} property of Yu and Yun \cite[Section~4]{YuYun2009}, which requires the countable kernel itself to be closed.

The closed Lindel\"of $\Sigma$ assignment kernel in Theorem~\ref{thm:closed-LSigma-kernel} allows a second application of the $D$-property. Because this kernel is closed in $X$, the closed discrete subset obtained inside it is also closed discrete in $X$.

\begin{corollary}\label{cor:two-assignments}
Let $X$ be a charming space and let $M$ and $N$ be neighborhood assignments on $X$. Then there exists a set $D\subseteq X$ that is closed discrete in $X$ and satisfies
\[
M[N[D]]=X.
\]
\end{corollary}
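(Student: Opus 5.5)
We apply Theorem~\ref{thm:closed-LSigma-kernel} to the outer assignment $M$ and then use the $D$-property of the resulting closed kernel for the inner assignment $N$.

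First, Theorem~\ref{thm:closed-LSigma-kernel} gives a countable kernel $A$ of $M$ such that $K=\overline A^{\,X}$ is a closed Lindel\"of $\Sigma$ subspace of $X$. Since $A\subseteq K$, we have $M[K]=X$. Because $K$ is Lindel\"of $\Sigma$, it is a $D$-space \cite{Buzyakova2002}.

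Next, restrict $N$ to $K$ by putting $N_K(x)=N(x)\cap K$ for $x\in K$. This is a neighborhood assignment on $K$, so it has a kernel $D\subseteq K$ that is closed discrete in $K$. Thus
\[
K=N_K[D]\subseteq N[D].
\]
Since $K$ is closed in $X$, Lemma~\ref{lem:closed-subspace-cd} shows that $D$ is closed discrete in $X$.

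Finally, $K\subseteq N[D]$ gives $X=M[K]\subseteq M[N[D]]$, so $M[N[D]]=X$.

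There is no real obstacle beyond Theorem~\ref{thm:closed-LSigma-kernel}. The one point to check is that the kernel lies in $X$ and is closed there, so that Lemma~\ref{lem:closed-subspace-cd} applies. This is exactly why we use the closure $K$ rather than $A$ itself: the countable set $A$ need not be closed, and a set closed discrete in $A$ could accumulate outside $A$.
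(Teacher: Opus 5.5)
Your proposal is correct and follows the paper's proof essentially step for step. You apply Theorem~\ref{thm:closed-LSigma-kernel} to $M$ to get the closed Lindel\"of $\Sigma$ kernel $K$, use the $D$-property of $K$ on $N(\cdot)\cap K$, transfer closed discreteness to $X$ via Lemma~\ref{lem:closed-subspace-cd}, and finish with monotonicity of $A\mapsto M[A]$.
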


\begin{proof}
Apply Theorem~\ref{thm:closed-LSigma-kernel} to $M$ and choose a kernel $K$ for $M$ that is closed in $X$ and Lindel\"of $\Sigma$. Then
\[
M[K]=X.
\]
Since $K$ is Lindel\"of $\Sigma$, it is a $D$-space. Apply the $D$-property of $K$ to the neighborhood assignment
\[
x\longmapsto N(x)\cap K.
\]
Hence there is a closed discrete set $D\subseteq K$ such that
\[
K\subseteq N[D].
\]
Since $K$ is closed in $X$, Lemma~\ref{lem:closed-subspace-cd} shows that $D$ is closed discrete in $X$. Monotonicity of the set operation $A\mapsto M[A]$ gives
\[
X=M[K]\subseteq M[N[D]]\subseteq X.
\]
Thus $M[N[D]]=X$.
\end{proof}

\begin{corollary}\label{cor:two-step}
For every neighborhood assignment $\varphi$ on a charming space $X$, there exists a closed discrete set $D\subseteq X$ such that
\[
\varphi[\varphi[D]]=X.
\]
\end{corollary}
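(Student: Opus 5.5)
This is the diagonal case of Corollary~\ref{cor:two-assignments}. The plan is to apply that corollary with both assignments equal to the given one, namely $M=N=\varphi$. The corollary then supplies a set $D\subseteq X$ that is closed discrete in $X$ and satisfies
\[
\varphi[\varphi[D]]=M[N[D]]=X,
\]
which is exactly the required conclusion. No further construction is needed.

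For completeness, the underlying argument can be unwound as follows. Theorem~\ref{thm:closed-LSigma-kernel} gives a kernel $K$ of $\varphi$ that is closed in $X$ and Lindel\"of $\Sigma$. Because $K$ is Lindel\"of $\Sigma$, it is a $D$-space, so the restricted assignment $x\mapsto\varphi(x)\cap K$ has a closed discrete kernel $D\subseteq K$. Lemma~\ref{lem:closed-subspace-cd} then shows that $D$ is closed discrete in $X$, since $K$ is closed. Finally, monotonicity of $A\mapsto\varphi[A]$ gives
\[
X=\varphi[K]\subseteq\varphi[\varphi[D]],
\]
so $\varphi[\varphi[D]]=X$.

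There is no genuine obstacle here. The only point to check is that Corollary~\ref{cor:two-assignments} places no requirement that $M$ and $N$ be distinct. It does not, so the specialization $M=N=\varphi$ is legitimate.
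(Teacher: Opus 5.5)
Your proposal is correct and is the same as the paper's proof, which also takes $M=N=\varphi$ in Corollary~\ref{cor:two-assignments}. Your unwound argument matches the paper's proof of that corollary: Theorem~\ref{thm:closed-LSigma-kernel} gives a closed Lindel\"of $\Sigma$ kernel $K$, you apply the $D$-property inside $K$, and Lemma~\ref{lem:closed-subspace-cd} shows that $D$ is closed discrete in $X$.
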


\begin{proof}
Take $M=N=\varphi$ in Corollary~\ref{cor:two-assignments}.
\end{proof}

Corollary~\ref{cor:two-step} differs from the $D$-property by exactly one iteration of the assignment operator. In general, this extra iteration cannot be removed by a pointwise square refinement.

\section{Square refinements and closed lifts}

\subsection{\texorpdfstring{The classical $z$-space condition}{The classical z-space condition}}

For comparison with the two-step conclusion of Corollary~\ref{cor:two-step}, recall the classical $z$-space condition. Hicks and Sharma define a $z$-space as a space in which every neighbornet is normal and give the equivalent covering formulation that every indexed open cover $\{G_x:x\in X\}$, with $x\in G_x$ for every $x\in X$, has an indexed open refinement $\{H_x:x\in X\}$ such that $y\in H_x$ implies $H_y\subseteq G_x$ \cite[p.~109]{HicksSharma1979}. In the language used here, this covering formulation says exactly that every neighborhood assignment $\varphi$ on $X$ admits a neighborhood assignment $V$ with $V[V(x)]\subseteq\varphi(x)$ for every $x\in X$. Junnila \cite{Junnila1978} develops the general theory of neighbornets. Proposition~\ref{prop:two-one-refinement} gives the elementary translation between this classical one-assignment formulation and the two-assignment form used below.

\begin{proposition}\label{prop:two-one-refinement}
Let $X$ be a space and let $\varphi$ be a neighborhood assignment on $X$. The following are equivalent.
\begin{enumerate}[label=\textup{(\roman*)}]
\item There are neighborhood assignments $M,N$ on $X$ such that
\[
M[N(x)]\subseteq\varphi(x)
\qquad(x\in X).
\]
\item There is a neighborhood assignment $V$ on $X$ such that
\[
V[V(x)]\subseteq\varphi(x)
\qquad(x\in X).
\]
\end{enumerate}
\end{proposition}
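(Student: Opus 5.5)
(ii)$\Rightarrow$(i) is immediate: I take $M=N=V$.

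For (i)$\Rightarrow$(ii), given $M,N$ with $M[N(x)]\subseteq\varphi(x)$ for all $x$, I define
\[
V(x)=M(x)\cap N(x)\qquad(x\in X).
\]
This is an open set containing $x$, so $V$ is a neighborhood assignment. Now fix $x\in X$ and let $y\in V(x)$. Then $y\in N(x)$, and $V(y)\subseteq M(y)$, so
\[
V(y)\subseteq M(y)\subseteq M[N(x)]\subseteq\varphi(x).
\]
Taking the union over $y\in V(x)$ gives $V[V(x)]\subseteq\varphi(x)$.

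Both directions rest only on the monotonicity of $A\mapsto P[A]$ and of $P$ under pointwise shrinking. The only point needing care is which roles the intersection plays. The outer copy of $V$, evaluated at points $y\in V(x)$, must shrink into $M$. The inner copy, the index set $V(x)$, must shrink into $N(x)$. Taking the pointwise intersection achieves both at once, so I expect no real obstacle. No separation axioms are used, which matches the proposition being stated for an arbitrary space $X$.
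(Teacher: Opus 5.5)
Your proposal is correct and matches the paper's proof: the paper also takes $M=N=V$ for one direction and sets $V(x)=M(x)\cap N(x)$ for the other. It then uses $y\in N(x)$ and $V(y)\subseteq M(y)$ to get $V[V(x)]\subseteq M[N(x)]\subseteq\varphi(x)$.
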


\begin{proof}
If $V$ witnesses \textup{(ii)}, then taking $M=N=V$ gives \textup{(i)}. Conversely, choose neighborhood assignments $M,N$ witnessing \textup{(i)} and define
\[
V(x)=M(x)\cap N(x).
\]
Then $V$ is a neighborhood assignment. If $y\in V(x)$ and $z\in V(y)$, then $y\in N(x)$ and $z\in M(y)$, so
\[
z\in M[N(x)]\subseteq\varphi(x).
\]
Hence $V[V(x)]\subseteq\varphi(x)$ for every $x$.
\end{proof}

Hicks and Sharma further note that every countable $T_1$ space is a $z$-space \cite[p.~110]{HicksSharma1979}, that the $z$-space property is hereditary \cite[p.~111]{HicksSharma1979}, and that every compact metrizable $z$-space is countable \cite[Corollary~3]{HicksSharma1979}. Combining these classical results with Proposition~\ref{prop:two-one-refinement} gives the following reformulation in the language of neighborhood assignments.

\begin{corollary}\label{cor:compact-metric-roots}
For a $\sigma$-compact metrizable space $X$, the following are equivalent.
\begin{enumerate}[label=\textup{(\roman*)}]
\item $X$ is countable.
\item Every neighborhood assignment $\varphi$ on $X$ admits neighborhood assignments $M,N$ on $X$ such that $M[N(x)]\subseteq\varphi(x)$ for every $x\in X$.
\item Every neighborhood assignment $\varphi$ on $X$ admits a neighborhood assignment $V$ on $X$ such that $V[V(x)]\subseteq\varphi(x)$ for every $x\in X$.
\end{enumerate}
\end{corollary}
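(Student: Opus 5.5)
The plan is to route everything through Proposition~\ref{prop:two-one-refinement} together with the three Hicks--Sharma facts quoted just before the statement. The proposition applies to each fixed $\varphi$, so \textup{(ii)}$\Leftrightarrow$\textup{(iii)} follows by quantifying over all neighborhood assignments. Condition \textup{(iii)} is, by the translation recalled at the start of this subsection, exactly the covering formulation of the $z$-space property. Given $\varphi$, set $G_x=\varphi(x)$. A refinement $\{H_x\}$ with $y\in H_x\Rightarrow H_y\subseteq G_x$ then yields $V(x)=H_x$ with $V[V(x)]\subseteq\varphi(x)$, and the converse is immediate. So it only remains to show that a $\sigma$-compact metrizable space is a $z$-space if and only if it is countable.

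For \textup{(i)}$\Rightarrow$\textup{(iii)}, a countable metrizable space is a countable $T_1$ space. It is therefore a $z$-space by \cite[p.~110]{HicksSharma1979}, which gives \textup{(iii)}.

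For \textup{(iii)}$\Rightarrow$\textup{(i)}, write $X=\bigcup_{n<\omega}C_n$ with each $C_n$ compact. Each $C_n$ is compact metrizable, and it is a $z$-space because the $z$-space property is hereditary \cite[p.~111]{HicksSharma1979}. By \cite[Corollary~3]{HicksSharma1979}, each $C_n$ is then countable, so $X$ is countable.

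The only point needing care is the heredity step: the subspace $C_n$ must inherit \textup{(iii)} in the neighborhood-assignment formulation. If one prefers not to rely on the cited heredity, there is a direct argument for a closed $C_n$. Extend an assignment $\psi$ on $C_n$ to $\varphi$ on $X$ by choosing open $\varphi(c)\subseteq X$ with $\varphi(c)\cap C_n=\psi(c)$, and putting $\varphi(x)=X\setminus C_n$ for $x\notin C_n$. Take $V$ with $V[V(x)]\subseteq\varphi(x)$, and restrict it to $C_n$ by intersecting with $C_n$. This restriction still satisfies the square condition relative to $\psi$. I expect no genuine obstacle beyond this bookkeeping.
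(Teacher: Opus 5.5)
Your proof is correct and matches the paper's argument step for step. Proposition~\ref{prop:two-one-refinement} gives \textup{(ii)}$\Leftrightarrow$\textup{(iii)}, the Hicks--Sharma covering characterization identifies \textup{(iii)} with the $z$-space property, countable $T_1$ spaces are $z$-spaces, and heredity together with the countability of compact metrizable $z$-spaces handles the $\sigma$-compact decomposition. Your supplementary restriction argument for passing \textup{(iii)} to the closed pieces $C_n$ is also valid, though the paper simply cites the heredity result; it works for any subspace, since the values of $\varphi$ off the subspace play no role.
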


\begin{proof}
By Proposition~\ref{prop:two-one-refinement}, conditions \textup{(ii)} and \textup{(iii)} are equivalent. If $X$ is countable, then $X$ is a $z$-space by \cite[p.~110]{HicksSharma1979}, so the covering characterization \cite[p.~109]{HicksSharma1979} gives \textup{(iii)}. Conversely, assume \textup{(iii)}. The covering characterization also shows that $X$ is a $z$-space. Write $X=\bigcup_{n\in\omega}K_n$ with each $K_n$ compact. Heredity of the $z$-space property \cite[p.~111]{HicksSharma1979} makes every $K_n$ a compact metrizable $z$-space, hence countable by \cite[Corollary~3]{HicksSharma1979}. Therefore $X$ is countable.
\end{proof}

The interval case will be used below.

\begin{corollary}\label{cor:no-square-refinement}
There is a neighborhood assignment $\varphi$ on $[0,1]$ for which no pair of neighborhood assignments $M,N$ on $[0,1]$ satisfies
\[
M[N(x)]\subseteq\varphi(x)
\qquad\text{for every }x\in[0,1].
\]
\end{corollary}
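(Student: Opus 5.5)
This follows directly from Corollary~\ref{cor:compact-metric-roots}, applied to $X=[0,1]$. The interval is compact metrizable, hence $\sigma$-compact metrizable, so that corollary applies. Since $[0,1]$ is uncountable, condition~\textup{(i)} of Corollary~\ref{cor:compact-metric-roots} fails. By the stated equivalence, condition~\textup{(ii)} also fails. Its negation says exactly that there is a neighborhood assignment $\varphi$ on $[0,1]$ such that no pair $M,N$ of neighborhood assignments satisfies $M[N(x)]\subseteq\varphi(x)$ for all $x$. This is the assertion of Corollary~\ref{cor:no-square-refinement}.

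I would write the proof in this order.
\begin{enumerate}[label=\textup{(\arabic*)}]
\item Observe that $[0,1]$ is $\sigma$-compact metrizable, taking the single compact piece $K_0=[0,1]$.
\item Recall that $[0,1]$ is uncountable, for instance by Cantor's diagonal argument or the Baire category theorem.
\item Invoke the implication (ii)$\Rightarrow$(i) of Corollary~\ref{cor:compact-metric-roots} in contrapositive form.
\item Unwind the quantifiers in (ii) to obtain the displayed statement.
\end{enumerate}

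The only substantive input is the classical fact of Hicks and Sharma that every compact metrizable $z$-space is countable \cite[Corollary~3]{HicksSharma1979}. That fact is already packaged into Corollary~\ref{cor:compact-metric-roots}, so there is no real obstacle here.

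If a self-contained witness were wanted, I would not attempt it in this proof; the paper's route is the citation. Should one be needed, I would try to build $\varphi$ by transfinite recursion over the continuum many points, using a Baire category argument on $[0,1]$ to defeat every candidate pair $(M,N)$. That step would be the hard part, and it is unnecessary here.
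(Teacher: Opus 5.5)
Your proposal is correct and matches the paper's proof exactly: it applies Corollary~\ref{cor:compact-metric-roots} to the uncountable compact metrizable space $[0,1]$, notes that condition (i) fails and hence (ii) fails, and unwinds the negation of (ii) to get the required assignment $\varphi$. The speculative remark about a self-contained transfinite construction is not needed and can be dropped.
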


\begin{proof}
The space $[0,1]$ is uncountable, compact, and metrizable. Hence condition \textup{(i)} of Corollary~\ref{cor:compact-metric-roots} fails, so condition \textup{(ii)} fails as well. Thus some neighborhood assignment $\varphi$ has no pair $M,N$ with $M[N(x)]\subseteq\varphi(x)$ for every $x\in[0,1]$.
\end{proof}

\subsection{Lifts through continuous closed surjections}

A related fiber-saturation construction appears in the proof of \cite[Proposition~2.13]{KunziFletcher1985}. For continuous closed surjections, the fixed-assignment statement takes the form below.

\Needspace{12\baselineskip}
\begin{theorem}\label{thm:closed-lift}
Let $p:Z\to X$ be a continuous closed surjection and let $\varphi$ be a neighborhood assignment on $X$. The following are equivalent.
\begin{enumerate}[label=\textup{(\roman*)}]
\item There is a neighborhood assignment $V$ on $Z$ such that
\[
p\bigl(V[V(z)]\bigr)\subseteq\varphi(p(z))
\qquad(z\in Z).
\]
\item There is a neighborhood assignment $M$ on $X$ such that
\[
M[M(x)]\subseteq\varphi(x)
\qquad(x\in X).
\]
\end{enumerate}
\end{theorem}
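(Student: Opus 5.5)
The implication \textup{(ii)}$\Rightarrow$\textup{(i)} is a direct pullback, and \textup{(i)}$\Rightarrow$\textup{(ii)} is a fiber-saturation argument that uses the closedness of $p$.

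For \textup{(ii)}$\Rightarrow$\textup{(i)}, take $M$ as in \textup{(ii)} and set
\[
V(z)=p^{-1}\bigl(M(p(z))\bigr)\qquad(z\in Z).
\]
This set is open by continuity of $p$ and contains $z$, so $V$ is a neighborhood assignment on $Z$. Let $w\in V(z)$ and $u\in V(w)$. Then $p(w)\in M(p(z))$ and $p(u)\in M(p(w))$. Hence $p(u)\in M[M(p(z))]\subseteq\varphi(p(z))$, which gives $p(V[V(z)])\subseteq\varphi(p(z))$.

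For \textup{(i)}$\Rightarrow$\textup{(ii)}, fix $V$ as in \textup{(i)}. For $x\in X$ put $U_x=V[p^{-1}(x)]$. This is an open subset of $Z$ containing the fiber $p^{-1}(x)$. Since $p$ is closed, the set
\[
M(x)=X\setminus p\bigl(Z\setminus U_x\bigr)
\]
is open in $X$. It contains $x$ because the whole fiber lies in $U_x$. It also satisfies $p^{-1}(M(x))\subseteq U_x$. The key point is that this last inclusion holds for entire fibers, not just for single points of them. Producing a saturated open set inside $U_x$ is exactly where closedness of $p$ is needed, and it is the main step of the proof.

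It remains to verify $M[M(x)]\subseteq\varphi(x)$. Let $y\in M(x)$ and $q\in M(y)$. Since $p$ is surjective, choose $u\in p^{-1}(q)$.
\begin{itemize}
\item Because $p^{-1}(q)\subseteq p^{-1}(M(y))\subseteq U_y$, there is $w\in p^{-1}(y)$ with $u\in V(w)$.
\item Because $p^{-1}(y)\subseteq U_x$, there is $z\in p^{-1}(x)$ with $w\in V(z)$.
\end{itemize}
Thus $u\in V[V(z)]$, so
\[
q=p(u)\in p\bigl(V[V(z)]\bigr)\subseteq\varphi(p(z))=\varphi(x).
\]
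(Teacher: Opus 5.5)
Your proof is correct and follows the paper's argument: the same saturated set $M(x)=X\setminus p\bigl(Z\setminus V[p^{-1}(x)]\bigr)$ for (i)$\Rightarrow$(ii), and the same pullback $V(z)=p^{-1}\bigl(M(p(z))\bigr)$ for (ii)$\Rightarrow$(i). Your element chase through $u\in V(w)$ and $w\in V(z)$ is an unpacked version of the paper's inclusions $M(y)\subseteq p(V[F_y])\subseteq\varphi(x)$.
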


\begin{proof}
Assume \textup{(i)}, and choose a neighborhood assignment $V$ witnessing it. For $x\in X$, write
\[
F_x=p^{-1}(\{x\})
\]
and define
\begin{equation}\label{eq:downstairs-M}
M(x)=X\setminus p\bigl(Z\setminus V[F_x]\bigr).
\end{equation}
Because $V[F_x]$ is open and $p$ is closed, $M(x)$ is open. Since $F_x\subseteq V[F_x]$, no point of $Z\setminus V[F_x]$ maps to $x$, so $x\in M(x)$.

Fix $y\in M(x)$. By \eqref{eq:downstairs-M}, the whole fiber $F_y$ is contained in $V[F_x]$. If $w\in F_y$, choose $t\in F_x$ with $w\in V(t)$. Then
\[
V(w)\subseteq V[V(t)],
\]
so
\[
p(V(w))\subseteq p(V[V(t)])\subseteq\varphi(p(t))=\varphi(x).
\]
Taking the union over $w\in F_y$ gives
\[
p(V[F_y])\subseteq\varphi(x).
\]
For the inclusion $M(y)\subseteq p(V[F_y])$, let $u\in M(y)$. By surjectivity choose $v\in Z$ with $p(v)=u$. If $v\notin V[F_y]$, then $u\in p(Z\setminus V[F_y])$, contradicting the definition of $M(y)$. Hence $v\in V[F_y]$, so $u\in p(V[F_y])$. Thus
\[
M(y)\subseteq p(V[F_y])\subseteq\varphi(x)
\]
whenever $y\in M(x)$. Taking the union over $y\in M(x)$ gives
\[
M[M(x)]\subseteq\varphi(x).
\]
This proves \textup{(ii)}.

Conversely, assume \textup{(ii)}, and choose a neighborhood assignment $M$ witnessing it. Define
\[
V(z)=p^{-1}\bigl(M(p(z))\bigr).
\]
By continuity of $p$, the map $V$ is a neighborhood assignment on $Z$. Surjectivity gives
\[
p(V(w))=M(p(w))\qquad(w\in Z)
\]
and, in particular, $p(V(z))=M(p(z))$. Therefore
\[
p(V[V(z)])
 =M[p(V(z))]
 =M[M(p(z))]
 \subseteq\varphi(p(z)),
\]
which proves \textup{(i)}.
\end{proof}

\begin{corollary}\label{cor:no-closed-lift}
For the assignment $\varphi$ in Corollary~\ref{cor:no-square-refinement}, no continuous closed surjection $p:Z\to[0,1]$ admits a neighborhood assignment $V$ on $Z$ satisfying
\[
p(V[V(z)])\subseteq\varphi(p(z))
\qquad(z\in Z).
\]
\end{corollary}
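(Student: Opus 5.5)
The corollary follows by combining Theorem~\ref{thm:closed-lift} with Corollary~\ref{cor:no-square-refinement}, arguing by contradiction.

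Let $\varphi$ be the assignment on $[0,1]$ supplied by Corollary~\ref{cor:no-square-refinement}. Suppose some continuous closed surjection $p:Z\to[0,1]$ and some neighborhood assignment $V$ on $Z$ satisfy $p(V[V(z)])\subseteq\varphi(p(z))$ for every $z\in Z$. This is exactly condition \textup{(i)} of Theorem~\ref{thm:closed-lift} with $X=[0,1]$. The implication \textup{(i)}$\Rightarrow$\textup{(ii)} of that theorem then yields a neighborhood assignment $M$ on $[0,1]$ with $M[M(x)]\subseteq\varphi(x)$ for every $x\in[0,1]$.

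Taking $N=M$ gives a pair $M,N$ with $M[N(x)]\subseteq\varphi(x)$ for every $x$. This is what Corollary~\ref{cor:no-square-refinement} forbids, so no such $p$ and $V$ exist.

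No step is a real obstacle. The only point to check is that Theorem~\ref{thm:closed-lift} needs no separation axioms or other hypotheses on $Z$ beyond $p$ being a continuous closed surjection. Its proof of \textup{(i)}$\Rightarrow$\textup{(ii)} uses only closedness of $p$, openness of $V[F_x]$, and surjectivity, so it applies to arbitrary $Z$.
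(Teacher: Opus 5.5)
Your proposal is correct and follows essentially the paper's proof: you argue by contradiction, use the implication (i)$\Rightarrow$(ii) of Theorem~\ref{thm:closed-lift} to get $M$ with $M[M(x)]\subseteq\varphi(x)$, and then contradict Corollary~\ref{cor:no-square-refinement}. Your choice $N=M$, which the paper leaves implicit, and your check that no separation hypotheses on $Z$ are needed are both accurate.
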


\begin{proof}
Otherwise Theorem~\ref{thm:closed-lift} would give a neighborhood assignment $M$ on $[0,1]$ with $M[M(x)]\subseteq\varphi(x)$ for every $x$, contradicting Corollary~\ref{cor:no-square-refinement}.
\end{proof}

\begin{remark}\label{rem:scope-lift}
For a continuous closed surjection $p:Z\to X$ and a fixed neighborhood assignment $\varphi$ on $X$, Theorem~\ref{thm:closed-lift} shows that a neighborhood assignment $V$ on $Z$ satisfying $p(V[V(z)])\subseteq\varphi(p(z))$ for every $z\in Z$ can exist only if $\varphi$ already has a pointwise square refinement on $X$. This does not address constructions that control only those centers that are eventually selected or other lifting arguments.
\end{remark}

\section{\texorpdfstring{An $\mathrm{HFC}_{\mathrm w}$ obstruction to charmingness}{An HFCw obstruction to charmingness}}\label{sec:hfcw}

Uncountable $\mathrm{HFC}_{\mathrm w}$ subspaces of $[\omega_1]^{\leq\omega}$ serve as a comparison class for the charming-space $D$-problem. Soukup and Szeptycki studied HFC and $\mathrm{HFC}_{\mathrm w}$ spaces in connection with the $D$-property \cite[Theorems~1, 4 and~7]{SoukupSzeptycki2019}. For such subspaces, the $\mathrm{HFC}_{\mathrm w}$ covering condition has two consequences used below. It rules out uncountable Lindel\"of $\Sigma$ subspaces and, when applied to pairwise disjoint families of singletons, forces countable sets to have countable closure. The first consequence forces any Lindel\"of $\Sigma$ kernel to be countable. A second singleton-family application then rules out charmingness. Combining the countable-closure consequence with hereditary Lindel\"ofness gives closed countable assignment kernels. The $\diamondsuit$-construction in version~2 of Tsaban's preprint falls within this class \cite[version~2, Section~4]{Tsaban2026}.

For $0<m<\omega$, let $F=\{F(0)<\cdots<F(m-1)\}\in[\omega_1]^m$. For $b\subseteq m$, put
\[
F[b]=\{F(i):i\in b\}.
\]
If $\mathcal F\subseteq[\omega_1]^m$, write
\[
[b,\mathcal F]
 =\{x\subseteq\omega_1:(\exists F\in\mathcal F)\ x\cap F=F[b]\}.
\]
Using the formulation in \cite[Definition~3.2(ii)]{Juhasz2002}, an uncountable subspace $Z\subseteq\mathcal P(\omega_1)$ is \emph{weakly hereditarily finally covering} ($\mathrm{HFC}_{\mathrm w}$) if for every $0<m<\omega$ and every uncountable pairwise disjoint family $\mathcal F\subseteq[\omega_1]^m$, there is a countable $\mathcal F_0\subseteq\mathcal F$ such that
\[
|Z\setminus[b,\mathcal F_0]|\leq\omega
\qquad(b\subseteq m).
\]
This property passes to every uncountable subspace $Y\subseteq Z$. For fixed $m$ and an uncountable pairwise disjoint family $\mathcal F\subseteq[\omega_1]^m$, any countable $\mathcal F_0\subseteq\mathcal F$ witnessing the condition for $Z$ also witnesses it for $Y$, since $Y\setminus[b,\mathcal F_0]\subseteq Z\setminus[b,\mathcal F_0]$ for every $b\subseteq m$.

\begin{theorem}\label{thm:hfcw-charming}
Let $Z\subseteq[\omega_1]^{\leq\omega}$ be an uncountable $\mathrm{HFC}_{\mathrm w}$ subspace of the Cantor cube $\mathcal P(\omega_1)$. Then the following statements hold.
\begin{enumerate}[label=\textup{(\roman*)}]
\item Every Lindel\"of $\Sigma$ subspace of $Z$ is countable.
\item The closure in $Z$ of every countable subset of $Z$ is countable.
\item $Z$ is not charming.
\item $Z$ is dually closed countable in the sense of \cite[Section~4]{YuYun2009}, meaning that every neighborhood assignment on $Z$ has a closed countable kernel.
\end{enumerate}
\end{theorem}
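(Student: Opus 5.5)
I will prove the four items in order, using two applications of the $\mathrm{HFC}_{\mathrm w}$ condition: one with $m$ general (or $m=1$ with compact sets) for (i), and one with $m=1$, singletons, for (ii). Then (iii) and (iv) follow from (i) and (ii).

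\emph{Item (ii).} Let $C\subseteq Z$ be countable. Every $x\in Z$ is a countable subset of $\omega_1$, so there is $\delta<\omega_1$ with $\bigcup C\subseteq\delta$. Take the uncountable pairwise disjoint family $\mathcal F=\{\{\alpha\}:\delta\le\alpha<\omega_1\}\subseteq[\omega_1]^1$ and choose a countable witness $\mathcal F_0$. With $b=\varnothing$ we get $[\varnothing,\mathcal F_0]=\{x:(\exists\alpha\in\bigcup\mathcal F_0)\ \alpha\notin x\}$, which is automatic, so the useful choice is $b=\{0\}$. There $[\{0\},\mathcal F_0]=\{x:x\cap(\bigcup\mathcal F_0)\neq\varnothing\}$, and $Z\setminus[\{0\},\mathcal F_0]$ is countable. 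The set $W=\{x:x\cap\bigcup\mathcal F_0\neq\varnothing\}$ is open in the Cantor cube and misses $C$, because $\bigcup\mathcal F_0\subseteq\omega_1\setminus\delta$. Hence $\overline C^{\,Z}\subseteq Z\setminus W$, which is countable.

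\emph{Item (i).} Let $S\subseteq Z$ be Lindel\"of $\Sigma$ and assume $S$ is uncountable. Since $S$ is a subspace of a Cantor cube, it is Lindel\"of $\Sigma$ with a countable network $\mathcal N$ modulo a compact cover $\mathcal C$. I would first reduce to the case of a compact set: a Lindel\"of $\Sigma$ space with countable closures of countable sets has only countably many points unless some $C\in\mathcal C$ is uncountable. To see this, take the countable family $\mathcal N$, pick a countable set $Q$ meeting every nonempty $\overline N$, and use a closing-off argument so that each $C\subseteq\overline{Q\cap\ldots}$; this is the delicate part.

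More directly, I would show that an uncountable compact $K\subseteq Z$ is impossible. Such a $K$ has uncountable weight, since compacta in $Z$ with countable weight are metrizable compacta, and (ii) forces them to be countable. Now run the $\Delta$-system/elementary-submodel argument. For an uncountable $K\subseteq[\omega_1]^{\le\omega}$ compact, pick points $x_\alpha\in K$ and finite sets $F_\alpha$ with $x_\alpha\cap F_\alpha$ prescribed, and apply $\mathrm{HFC}_{\mathrm w}$ to get countable $\mathcal F_0$ with $K\setminus[b,\mathcal F_0]$ countable. Then a complete accumulation point $y\in K$ of $\{x_\alpha\}$ for a suitable $b$ contradicts the pattern. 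Cleaner still: by (ii) and the network, $S=\bigcup_{N\in\mathcal N}$ pieces each contained in the closure of a countable set, as follows. For each $N\in\mathcal N$ pick a countable $Q_N\subseteq N$ dense in $N$; this requires hereditary separability, which I expect $Z$ to have via the standard fact that $\mathrm{HFC}_{\mathrm w}$ spaces are hereditarily Lindel\"of \cite{Juhasz2002}. Each $C$ is the intersection of the $N\supseteq C$. The main obstacle is exactly this step, getting countability of $S$ from (ii); I would try to show $C\subseteq\bigcap\{\overline{Q_N}:C\subseteq N\}$, so $S\subseteq\bigcup_N\overline{Q_N}^{\,Z}$, which is countable by (ii).

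\emph{Item (iii).} Suppose $Y$ is a Lindel\"of $\Sigma$ kernel. By (i), $Y$ is countable, say $Y\subseteq\{x:x\subseteq\delta\}$ with $\delta<\omega_1$ as before. The open set $U=Z\setminus[\ldots]$ obtained from singletons above $\delta$ via the second $\mathrm{HFC}_{\mathrm w}$ application, namely $U=\{x:x\cap\bigcup\mathcal F_0=\varnothing\}$, is not quite right. Instead I would choose $U=Z\setminus\{z\}$-type neighborhoods: $Z\setminus U$ must be Lindel\"of $\Sigma$, hence countable, for every open $U\supseteq Y$. Since $Z$ is $T_1$ and $Y$ is countable, I would find an open $U\supseteq Y$ with uncountable complement. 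Take $U=\{x:x\cap\{\alpha\}=\varnothing\ \text{for } \ldots\}$; concretely, $U=\{x: x\cap F=\varnothing\}$ for a suitable countable family is not open, so I would use the singleton application: uncountably many $z$ meet $\bigcup\mathcal F_0$, while $Y$ lies in the open set $\{x:\bigcup\mathcal F_0\cap x=\varnothing\}$. That set is closed, so I would adjust using compactness of the pieces. This contradicts (i).

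\emph{Item (iv).} $Z$ is hereditarily Lindel\"of, so any assignment has a countable kernel $A$. By (ii), $\overline A^{\,Z}$ is countable and closed, and it is still a kernel.
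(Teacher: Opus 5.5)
Your items (ii) and (iv) are correct and are the paper's own arguments.

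\textbf{Item (i) is not proved, and it is the heart of the theorem.} Your ``cleaner'' route cannot work. Hereditary Lindel\"ofness does not give hereditary separability, and here separability provably fails. By your own item (ii), any $N\subseteq Z$ with a countable dense subset $Q_N$ satisfies $N\subseteq\overline{Q_N}^{\,Z}$, so $N$ is countable. Thus the sets $Q_N$ exist only when $N$ is already countable, which is what you are trying to prove. Your first route leaves both the reduction to a single compact piece (``the delicate part'') and the compact case itself as sketches.

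The missing idea is to combine compactness with the countability of the points. If $K\subseteq Z$ is compact and $A\subseteq\omega_1$ is uncountable, then the clopen sets $\{z:\alpha\notin z\}$, $\alpha\in A$, cover $K$. Hence some finite $F\subseteq A$ satisfies $K\cap R_F=\varnothing$, where $R_F=\{z\in Z:F\subseteq z\}$.

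The countable network then replaces your reduction step:
\begin{enumerate}
\item Choose distinct $y_\xi\in S$ for $\xi<\omega_1$, compacta $K_\xi\ni y_\xi$ from $\mathcal C$, and pairwise disjoint uncountable blocks $A_\xi\subseteq\omega_1$.
\item Take $F_\xi\subseteq A_\xi$ as above. Since $S\setminus R_{F_\xi}$ is an open neighborhood of $K_\xi$ in $S$, there is $N_{n(\xi)}\in\mathcal N$ with $K_\xi\subseteq N_{n(\xi)}\subseteq S\setminus R_{F_\xi}$.
\item Pigeonhole gives an uncountable $I$ with $n(\xi)=n$ and $|F_\xi|=m$ for all $\xi\in I$.
\item Apply $\mathrm{HFC}_{\mathrm w}$ to the pairwise disjoint family $\{F_\xi:\xi\in I\}$ with the full pattern $b=m$. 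This gives a countable $\mathcal F_0$ such that $Z\setminus\bigcup_{F\in\mathcal F_0}R_F$ is countable.
\item Since $N_n$ misses every $R_{F_\xi}$, it is countable. Yet it contains the uncountably many distinct points $y_\xi$, $\xi\in I$, a contradiction.
\end{enumerate}

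\textbf{Item (iii) also stops short of a proof.} You correctly observe that $Z\setminus U$ must be countable for every open $U\supseteq Y$. You then try to build one suitable $U$ and stall, because $\{x:x\cap\bigcup\mathcal F_0=\varnothing\}$ is closed rather than open.

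The fix is to use the sets $R_\beta=\{z\in Z:\beta\in z\}$ one at a time. Each $R_\beta$ is clopen, and for $\beta\notin\bigcup Y$ it misses $Y$. So $Z\setminus R_\beta$ is an open neighborhood of the kernel, hence $R_\beta$ is Lindel\"of $\Sigma$ and, by (i), countable. Now apply the singleton instance of $\mathrm{HFC}_{\mathrm w}$ with $b=\{0\}$ to $\{\{\beta\}:\beta\in\omega_1\setminus\bigcup Y\}$. It gives a countable $B_0$ with $Z\setminus\bigcup_{\beta\in B_0}R_\beta$ countable. Meanwhile $\bigcup_{\beta\in B_0}R_\beta$ is a countable union of countable sets, so $Z$ would be countable, a contradiction. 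No compactness adjustment is needed.
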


\begin{proof}
For \textup{(i)}, suppose that $Y\subseteq Z$ is an uncountable Lindel\"of $\Sigma$ subspace. Fix a compact cover $\mathcal C$ of $Y$ and a countable network $\mathcal N=\{N_n:n\in\omega\}$ of $Y$ with respect to $\mathcal C$. Choose distinct points $y_\xi\in Y$ for $\xi<\omega_1$, and partition $\omega_1$ into pairwise disjoint sets $A_\xi$, $\xi<\omega_1$, each of cardinality $\omega_1$.

For each $\xi<\omega_1$, choose $K_\xi\in\mathcal C$ with $y_\xi\in K_\xi$. Every member of $K_\xi$ is a countable subset of $\omega_1$, whereas $A_\xi$ is uncountable. Hence
\[
\bigl\{K_\xi\cap\{z\in Z:\alpha\notin z\}:\alpha\in A_\xi\bigr\}
\]
is an open cover of $K_\xi$. By compactness there is a nonempty finite set $F_\xi\subseteq A_\xi$ such that
\[
K_\xi\cap R_{F_\xi}=\varnothing,
\qquad
R_F:=\{z\in Z:F\subseteq z\}.
\]
The set $R_{F_\xi}$ is clopen in $Z$, so $Y\setminus R_{F_\xi}$ is an open neighborhood of $K_\xi$ in $Y$. Choose $n(\xi)\in\omega$ with
\[
K_\xi\subseteq N_{n(\xi)}\subseteq Y\setminus R_{F_\xi}.
\]
After thinning the index set, there exist an uncountable set $I\subseteq\omega_1$, an $n\in\omega$, and an integer $m>0$ such that
\[
n(\xi)=n
\quad\text{and}\quad
|F_\xi|=m
\qquad(\xi\in I).
\]
Because $F_\xi\subseteq A_\xi$, the family
\[
\mathcal F=\{F_\xi:\xi\in I\}\subseteq[\omega_1]^m
\]
is uncountable and pairwise disjoint. The uncountable subspace $Y$ is itself $\mathrm{HFC}_{\mathrm w}$, so there is a countable $\mathcal F_0\subseteq\mathcal F$ such that
\[
|Y\setminus[b,\mathcal F_0]|\leq\omega
\qquad(b\subseteq m).
\]
Take $b=\{0,\ldots,m-1\}$. Then
\[
Z\cap[b,\mathcal F_0]
 =\bigcup_{F\in\mathcal F_0}R_F.
\]
The set $N_n$ is disjoint from $R_{F_\xi}$ for every $\xi\in I$, and therefore
\[
N_n\subseteq Y\setminus[b,\mathcal F_0].
\]
Thus $N_n$ is countable. On the other hand, $y_\xi\in K_\xi\subseteq N_n$ for every $\xi\in I$, which is impossible. This proves \textup{(i)}.

For \textup{(ii)}, let $A\subseteq Z$ be countable and put
\[
S=\bigcup A.
\]
The set $S$ is countable. Choose an uncountable set $B\subseteq\omega_1\setminus S$ and apply the $\mathrm{HFC}_{\mathrm w}$ property to the pairwise disjoint family
\[
\mathcal F=\{\{\beta\}:\beta\in B\}\subseteq[\omega_1]^1.
\]
For $b=\{0\}$ there is a countable $\mathcal F_0\subseteq\mathcal F$ such that
\[
C:=Z\setminus[b,\mathcal F_0]
\]
is countable. The set $[b,\mathcal F_0]\cap Z$ is open in $Z$, so $C$ is closed in $Z$. Since every member of $A$ is disjoint from every singleton in $\mathcal F_0$, one has $A\subseteq C$. Consequently,
\[
\overline A^{\,Z}\subseteq C,
\]
and \textup{(ii)} follows.

For \textup{(iii)}, suppose that $Z$ is charming and let $Y$ be a Lindel\"of $\Sigma$ kernel. By \textup{(i)}, $Y$ is countable, so $S=\bigcup Y$ is countable. For each $\beta\in\omega_1\setminus S$, put
\[
R_\beta=\{z\in Z:\beta\in z\}.
\]
The set $R_\beta$ is closed in $Z$ and disjoint from $Y$. Hence it is Lindel\"of $\Sigma$ by the defining property of the kernel $Y$, and therefore countable by \textup{(i)}. Apply the $\mathrm{HFC}_{\mathrm w}$ property to the uncountable pairwise disjoint family
\[
\mathcal F=\{\{\beta\}:\beta\in\omega_1\setminus S\}
\]
and the pattern $b=\{0\}$. There is a countable set $S_0\subseteq\omega_1\setminus S$ such that
\[
Z\setminus\bigcup_{\beta\in S_0}R_\beta
\]
is countable. The union $\bigcup_{\beta\in S_0}R_\beta$ is countable, and its complement in $Z$ is countable by the preceding display. Thus $Z$ is countable, a contradiction.

For \textup{(iv)}, every $\mathrm{HFC}_{\mathrm w}$ space is hereditarily Lindel\"of \cite[p.~3]{SoukupSzeptycki2019}, hence Lindel\"of. Let $M$ be a neighborhood assignment on $Z$ and choose a countable kernel $C$ for $M$. By \textup{(ii)}, the closure $\overline C^{\,Z}$ is countable. Since the closure of a kernel is again a kernel, $\overline C^{\,Z}$ is a closed countable kernel for $M$.
\end{proof}

Soukup and Szeptycki observed that their particular $\diamondsuit$-constructed $\mathrm{HFC}_{\mathrm w}$ example is dually second countable \cite[Section~2]{SoukupSzeptycki2019}. By contrast, Theorem~\ref{thm:hfcw-charming}\textup{(iv)} gives a closed countable assignment kernel for every uncountable $\mathrm{HFC}_{\mathrm w}$ subspace of $[\omega_1]^{\leq\omega}$.

Tsaban also states this countable-closure property for his version~2 construction and notes that the construction, together with its $\mathrm{HFC}_{\mathrm w}$ and non-$D$ features, answers both parts of Soukup and Szeptycki's Problem~15 and the $\sigma$-compact-closure variant following that problem \cite[version~2, Section~5]{Tsaban2026}. The proof of Theorem~\ref{thm:hfcw-charming}\textup{(ii)} derives the same countable-closure conclusion for every uncountable $\mathrm{HFC}_{\mathrm w}$ subspace of $[\omega_1]^{\leq\omega}$ from the $m=1$ case of the $\mathrm{HFC}_{\mathrm w}$ condition, rather than from features peculiar to Tsaban's construction.

\begin{corollary}\label{cor:tsaban-not-charming}
The space constructed under $\diamondsuit$ in version~2 of Tsaban's preprint \cite[version~2, Theorem~1.1]{Tsaban2026} is dually closed countable and not charming.
\end{corollary}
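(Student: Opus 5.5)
The corollary should follow directly from Theorem~\ref{thm:hfcw-charming}, once we check that Tsaban's space satisfies its hypotheses. The plan has three steps.

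First, from \cite[version~2, Sections~3 and~4]{Tsaban2026}, we record that the $\diamondsuit$-constructed space is an uncountable subspace $Z\subseteq[\omega_1]^{\leq\omega}$. Its topology must be the one inherited from the Cantor cube $\mathcal P(\omega_1)$; this is the topology in which the sets $R_F=\{z:F\subseteq z\}$ and their complements are clopen, as used in the proof of Theorem~\ref{thm:hfcw-charming}.

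Second, we verify that $Z$ is $\mathrm{HFC}_{\mathrm w}$ in exactly the sense defined before Theorem~\ref{thm:hfcw-charming}, namely the formulation of \cite[Definition~3.2(ii)]{Juhasz2002}. Tsaban proves the $\mathrm{HFC}_{\mathrm w}$ property for his construction. If his formulation differs cosmetically, for instance in indexing $F[b]$ or in quantifying over $m$, we translate it into the displayed condition $|Z\setminus[b,\mathcal F_0]|\leq\omega$ for all $b\subseteq m$.

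Third, with both hypotheses in place, we apply Theorem~\ref{thm:hfcw-charming}. Part~(iv) gives that $Z$ is dually closed countable, and part~(iii) gives that $Z$ is not charming. Neither conclusion uses Tsaban's non-$D$ property.

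The main obstacle is purely bibliographic: confirming that the topology and the $\mathrm{HFC}_{\mathrm w}$ definition in Tsaban's version~2 match the ones used here. I would check this first by comparing his definitions with those of Soukup and Szeptycki \cite{SoukupSzeptycki2019}, on which both formulations are based.
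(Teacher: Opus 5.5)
Your proposal is correct and follows the paper's proof: the paper likewise cites Tsaban's version~2 for the form $X=\{x_\xi:\xi<\omega_1\}$ with $x_\xi\subseteq\xi$ (so $X\subseteq[\omega_1]^{\leq\omega}$), for $|X|=\aleph_1$, and for the $\mathrm{HFC}_{\mathrm w}$ property. It then applies Theorem~\ref{thm:hfcw-charming}\textup{(iii)} to rule out charmingness and \textup{(iv)} to obtain dual closed countability, without using the non-$D$ property.
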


\begin{proof}
Tsaban's version~2 construction in \cite[version~2, Section~3]{Tsaban2026} has the form $X=\{x_\xi:\xi<\omega_1\}$ with $x_\xi\subseteq\xi$, so $X\subseteq[\omega_1]^{\leq\omega}$. By \cite[version~2, Theorem~1.1]{Tsaban2026}, this space has cardinality $\aleph_1$ and is therefore uncountable. Tsaban \cite[version~2, Section~4]{Tsaban2026} notes that the proof of Theorem~1.1 in that version shows that $X$ is $\mathrm{HFC}_{\mathrm w}$. Theorem~\ref{thm:hfcw-charming}\textup{(iii)} therefore rules out charmingness independently of the announced non-$D$ property, while \textup{(iv)} gives dually closed countability.
\end{proof}

A closed countable assignment kernel is Lindel\"of $\Sigma$, and every countable space is a $D$-space. Hence Theorem~\ref{thm:hfcw-charming}\textup{(iv)} gives the conclusions of Theorem~\ref{thm:closed-LSigma-kernel} and Corollary~\ref{cor:two-assignments} for every such $\mathrm{HFC}_{\mathrm w}$ space. Tsaban's version~2 construction therefore shows that, under $\diamondsuit$, there is a dually closed countable space that is neither charming nor a $D$-space. The remaining issue for charming spaces is the one-step condition in Theorem~\ref{thm:exact-criterion}\textup{(ii)}.

\section{The closed-core reduction}\label{sec:boundary-iteration}

\subsection{\texorpdfstring{The boundary obstruction and the $G_\delta$ criterion}{The boundary obstruction and the G-delta criterion}}

The boundary obstruction in \cite[Definition~3.30]{HuYin2026} admits a direct description. The underlying observation holds for every Lindel\"of subspace of a Hausdorff space.

\begin{lemma}\label{lem:cc-closure}
Let $X$ be Hausdorff, let $Y\subseteq X$ be Lindel\"of, and let $A\subseteq Y$ be countably compact. Then
\[
\overline A^{\,X}\subseteq Y.
\]
\end{lemma}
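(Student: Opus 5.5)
The plan is to argue by contradiction. Suppose $x\in\overline A^{\,X}\setminus Y$; we will produce an open cover of $A$ with no finite subcover.

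First, for each $y\in Y$ we have $y\ne x$, and $X$ is Hausdorff. So we can choose disjoint open sets $U_y\ni y$ and $W_y\ni x$ in $X$; in particular $x\notin\overline{U_y}^{\,X}$. The family $\{U_y\cap Y:y\in Y\}$ is an open cover of the Lindel\"of space $Y$, so it has a countable subcover $\{U_{y_n}:n\in\omega\}$, which covers $Y\supseteq A$.

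Next, since $A$ is countably compact, the countable open cover $\{U_{y_n}\cap A:n\in\omega\}$ of $A$ has a finite subcover, so $A\subseteq U_{y_0}\cup\dots\cup U_{y_k}$ for some $k$. Then
\[
\overline A^{\,X}\subseteq\overline{U_{y_0}}^{\,X}\cup\dots\cup\overline{U_{y_k}}^{\,X}.
\]
On the other hand, $W=W_{y_0}\cap\dots\cap W_{y_k}$ is an open neighborhood of $x$ that misses every $U_{y_i}$, $i\le k$. Hence $W\cap A=\varnothing$, which contradicts $x\in\overline A^{\,X}$.

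The only point needing care is the reduction to a countable subcover before using countable compactness. Countable compactness only applies to countable covers, and the Lindel\"of property of $Y$ supplies exactly that. No separation beyond Hausdorff is used, so the argument is routine.
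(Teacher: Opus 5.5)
Your proof is correct and is essentially the paper's argument: Hausdorff separation of each $y\in Y$ from $x$, a countable subcover from the Lindel\"of property of $Y$, a finite subcover from countable compactness of $A$, and then the finite intersection of the $W_{y_i}$ as a neighborhood of $x$ missing $A$. The only differences are cosmetic: you argue by contradiction rather than directly, and the displayed inclusion of $\overline A^{\,X}$ in the union of the closures $\overline{U_{y_i}}^{\,X}$ is never used and can be dropped.
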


\begin{proof}
If $A=\varnothing$, there is nothing to prove. Assume $A\ne\varnothing$ and fix $x\in X\setminus Y$. For each $y\in Y$, choose disjoint open sets $O_y$ and $V_y$ with $y\in O_y$ and $x\in V_y$. The family $\{O_y\cap Y:y\in Y\}$ covers the Lindel\"of space $Y$, so there is a countable set $C\subseteq Y$ such that $Y\subseteq\bigcup_{y\in C} O_y$. The sets $O_y\cap A$, $y\in C$, form a countable open cover of the countably compact space $A$, so for some finite $E\subseteq C$ one has
\[
A\subseteq\bigcup_{y\in E}O_y.
\]
Put
\[
V=\bigcap_{y\in E}V_y.
\]
Then $V$ is an open neighborhood of $x$, and $V\cap O_y=\varnothing$ for every $y\in E$. Hence $V\cap A=\varnothing$, so $x\notin\overline A^{\,X}$. Since $x\in X\setminus Y$ was arbitrary, $\overline A^{\,X}\subseteq Y$.
\end{proof}

\begin{proposition}\label{prop:all-boundary-bad}
Let $X$ be Hausdorff, let $Y\subseteq X$ be Lindel\"of, and put
\[
B=\overline Y^{\,X}.
\]
If $x\in B\setminus Y$ and $U$ is any open neighborhood of $x$ in $X$, then $U\cap Y$ is not countably compact.
\end{proposition}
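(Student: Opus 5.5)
The plan is a direct contradiction argument built on Lemma~\ref{lem:cc-closure}. Fix $x\in B\setminus Y$ and an open neighborhood $U$ of $x$ in $X$. Suppose, toward a contradiction, that $A=U\cap Y$ is countably compact. Since $A\subseteq Y$, $Y$ is Lindel\"of, and $X$ is Hausdorff, Lemma~\ref{lem:cc-closure} gives
\[
\overline A^{\,X}\subseteq Y.
\]

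Next I will show that $x\in\overline A^{\,X}$; this contradicts $x\notin Y$. Let $W$ be any open neighborhood of $x$ in $X$. Then $W\cap U$ is also an open neighborhood of $x$. Because $x\in B=\overline Y^{\,X}$, the set $W\cap U$ meets $Y$. Hence $W$ meets $U\cap Y=A$. Since $W$ was arbitrary, $x\in\overline A^{\,X}\subseteq Y$, which contradicts $x\in B\setminus Y$. Therefore $U\cap Y$ is not countably compact.

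No step presents a real obstacle, because the substantive work is already done in Lemma~\ref{lem:cc-closure}. The one point to check is the degenerate case $A=\varnothing$. This case cannot arise: $x\in\overline Y^{\,X}$ and $U$ is a neighborhood of $x$, so $U\cap Y\neq\varnothing$. In any case the closure argument above covers it uniformly, since it already shows that $A$ meets every neighborhood of $x$.
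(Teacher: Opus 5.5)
Your proof is correct and follows the paper's argument: you show $x\in\overline{U\cap Y}^{\,X}$ from $x\in\overline Y^{\,X}$, then invoke Lemma~\ref{lem:cc-closure} to get $\overline{U\cap Y}^{\,X}\subseteq Y$, which contradicts $x\notin Y$. The separate treatment of the empty case is harmless but unnecessary, since the closure argument already gives $U\cap Y\neq\varnothing$.
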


\begin{proof}
Because $x\in\overline Y^{\,X}$, every open neighborhood $W$ of $x$ has
\[
(W\cap U)\cap Y\ne\varnothing.
\]
Hence
\[
x\in\overline{U\cap Y}^{\,X}.
\]
If $U\cap Y$ were countably compact, Lemma~\ref{lem:cc-closure} would imply that its closure in $X$ is contained in $Y$, contradicting $x\notin Y$.
\end{proof}

For a charming space $X$ with Lindel\"of $\Sigma$ kernel $Y$ and $B=\overline Y^{\,X}$, the earlier obstruction set \cite[Definition~3.30]{HuYin2026} was defined by
\[
\begin{split}
\Obs(Y,B)=\{x\in B\setminus Y:\ &U\cap Y\text{ is not countably compact}\\
&\text{for every open neighborhood }U\text{ of }x\}.
\end{split}
\]
The corresponding closed core \cite[Definition~3.34]{HuYin2026} was
\[
\Core(Y,B)=\overline{\Obs(Y,B)}^{\,B}.
\]
The following identities will be used in the closed-core analysis.

\begin{corollary}\label{cor:obs-identity}
Let $X$ be a charming space with Lindel\"of $\Sigma$ kernel $Y$, and put $B=\overline Y^{\,X}$. Then
\[
\Obs(Y,B)=B\setminus Y
\]
and
\[
\Core(Y,B)=\overline{B\setminus Y}^{\,B}
=B\setminus\operatorname{Int}_B(Y).
\]
\end{corollary}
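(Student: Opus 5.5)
The plan is to prove the two identities separately: the first follows directly from Proposition~\ref{prop:all-boundary-bad}, and the second is elementary closure algebra inside $B$.

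For the first identity, the definition gives $\Obs(Y,B)\subseteq B\setminus Y$ at once. For the reverse inclusion we check the hypotheses of Proposition~\ref{prop:all-boundary-bad}. The space $X$ is Tychonoff, hence Hausdorff, by the standing convention. The kernel $Y$ is Lindel\"of, since every Lindel\"of $\Sigma$-space is Lindel\"of; this is immediate from the cover-and-network characterization recalled in Section~\ref{sec:preliminaries}. Now take $x\in B\setminus Y$. Proposition~\ref{prop:all-boundary-bad} shows that $U\cap Y$ fails to be countably compact for every open neighborhood $U$ of $x$. That is exactly the membership condition for $\Obs(Y,B)$, so $B\setminus Y\subseteq\Obs(Y,B)$ and the two sets are equal.

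The first equality for the core then follows by substitution: $\Core(Y,B)=\overline{\Obs(Y,B)}^{\,B}=\overline{B\setminus Y}^{\,B}$.

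For the last equality, work in the subspace $B$. Because $Y\subseteq B$, the complement of $Y$ in $B$ is $B\setminus Y$. We then use the standard duality between closure and interior in a topological space: the closure of a complement is the complement of the interior. This gives
\[
\overline{B\setminus Y}^{\,B}=B\setminus\operatorname{Int}_B(B\cap Y)=B\setminus\operatorname{Int}_B(Y).
\]

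No step here is a real obstacle. The only point that needs care is confirming the hypotheses of Proposition~\ref{prop:all-boundary-bad}, namely that $X$ is Hausdorff and that $Y$ is Lindel\"of, and both come directly from the standing conventions.
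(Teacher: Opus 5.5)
Your proposal is correct and follows the paper's proof: Proposition~\ref{prop:all-boundary-bad} gives the first identity, substitution into $\Core(Y,B)=\overline{\Obs(Y,B)}^{\,B}$ gives the first core equality, and closure--interior duality in $B$ gives the last. Your checks that $X$ is Hausdorff, that $Y$ is Lindel\"of, and that $\Obs(Y,B)\subseteq B\setminus Y$ holds by definition simply make explicit details the paper leaves implicit.
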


\begin{proof}
Proposition~\ref{prop:all-boundary-bad} gives $\Obs(Y,B)=B\setminus Y$. By \cite[Definition~3.34]{HuYin2026}, one has $\Core(Y,B)=\overline{B\setminus Y}^{\,B}$. The closure identity $\overline{B\setminus Y}^{\,B}=B\setminus\operatorname{Int}_B(Y)$ gives the second equality.
\end{proof}

These identities give equivalent $G_\delta$ and $F_\sigma$ formulations of the descriptive-complexity condition on the core trace.

\Needspace{12\baselineskip}
\begin{proposition}\label{prop:gdelta-equivalence}
Under the hypotheses of Corollary~\ref{cor:obs-identity}, write
\[
H=\Core(Y,B)=\overline{B\setminus Y}^{\,B}.
\]
Then the following are equivalent.
\begin{enumerate}[label=\textup{(\roman*)}]
\item $H\cap Y$ is a $G_\delta$ subset of $H$.
\item $B\setminus Y$ is an $F_\sigma$ subset of $B$.
\item $Y$ is a $G_\delta$ subset of $B$.
\end{enumerate}
\end{proposition}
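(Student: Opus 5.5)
The equivalence of (ii) and (iii) is just complementation inside $B$: $Y\subseteq B$, and a subset of $B$ is $G_\delta$ in $B$ exactly when its complement in $B$ is $F_\sigma$ in $B$. All the content therefore lies in linking (i) with (iii). Write $W=\operatorname{Int}_B(Y)$. By Corollary~\ref{cor:obs-identity}, $H=B\setminus W$, so $H$ is closed in $B$, $W$ is open in $B$, $W\subseteq Y$, and
\[
Y=W\cup(H\cap Y),\qquad B\setminus Y=H\setminus(H\cap Y).
\]

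For (iii)$\Rightarrow$(i), I would intersect with $H$: if $Y=\bigcap_n G_n$ with each $G_n$ open in $B$, then $H\cap Y=\bigcap_n(G_n\cap H)$, and each $G_n\cap H$ is open in $H$. So $H\cap Y$ is $G_\delta$ in $H$.

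For (i)$\Rightarrow$(iii), suppose $H\cap Y=\bigcap_n U_n$ with each $U_n$ open in $H$. Since $H$ is closed in $B$, each $H\setminus U_n$ is closed in $H$ and hence closed in $B$. Then
\[
B\setminus Y=H\setminus(H\cap Y)=\bigcup_n (H\setminus U_n),
\]
which is a countable union of sets closed in $B$, so $B\setminus Y$ is $F_\sigma$ in $B$. This is (ii), and (iii) follows. Equivalently, one can write $Y=\bigcap_n\bigl(W\cup U_n'\bigr)$, where $U_n'=U_n\cup W$; this set is open in $B$ because $U_n\cup W=B\setminus(H\setminus U_n)$.

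The only point needing care is the step $B\setminus Y\subseteq H$. It holds because $H$ is the closure of $B\setminus Y$; equivalently, because $W\subseteq Y$, which is exactly the identity $H=B\setminus\operatorname{Int}_B(Y)$ from Corollary~\ref{cor:obs-identity}. With that in hand, nothing about charmingness is needed beyond the identity; the argument is pure subspace topology.
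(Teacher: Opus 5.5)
Your proof is correct and follows essentially the paper's route. Both arguments rest on $B\setminus Y\subseteq H$, which gives $H\setminus(H\cap Y)=B\setminus Y$, together with the fact that sets closed in the closed subspace $H$ are closed in $B$, and both use complementation in $B$ for (ii)$\Leftrightarrow$(iii); the only cosmetic difference is that you get (iii)$\Rightarrow$(i) by intersecting a $G_\delta$ representation of $Y$ with $H$, where the paper restricts an $F_\sigma$ representation of $B\setminus Y$ to $H$.
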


\begin{proof}
Since $B\setminus Y\subseteq H$, one has
\[
H\setminus(H\cap Y)=H\setminus Y=B\setminus Y.
\]
Thus \textup{(i)} is equivalent to $B\setminus Y$ being $F_\sigma$ in $H$. The set $H$ is closed in $B$, so every set closed in $H$ is closed in $B$. Hence an $F_\sigma$ representation in $H$ is also one in $B$. Conversely, an $F_\sigma$ representation in $B$ restricts to $H$. This proves the equivalence of \textup{(i)} and \textup{(ii)}. The equivalence of \textup{(ii)} and \textup{(iii)} follows by complementation in $B$.
\end{proof}

Li, Lin and Lin call a charming space \emph{CL charming} when it has a closed Lindel\"of $\Sigma$ kernel \cite[Definition~4.1]{LiLinLin2017}. Every CL charming space is a $D$-space \cite[Corollary~3.4]{HuYin2026}. Proposition~\ref{prop:gdelta-equivalence} shows that the core-trace condition is equivalent to $Y$ being $G_\delta$ in $B=\overline{Y}^{\,X}$. Combining this equivalence with the Lindel\"of $\Sigma$ closure calculation in the proof of \cite[Proposition~3.10]{HuYin2026} gives the characterization below.

\begin{theorem}\label{thm:CL-equivalence}
Let $X$ be a charming space. The following are equivalent.
\begin{enumerate}[label=\textup{(\roman*)}]
\item $X$ is CL charming.
\item There is a Lindel\"of $\Sigma$ kernel $Y$ of $X$ such that, with $B=\overline Y^{\,X}$ and $H=\overline{B\setminus Y}^{\,B}$, the set $H\cap Y$ is $G_\delta$ in $H$.
\item There is a Lindel\"of $\Sigma$ kernel $Y$ of $X$ such that, with $B=\overline Y^{\,X}$ and $H=\overline{B\setminus Y}^{\,B}$, the set $H\cap Y$ is both dense and $G_\delta$ in $H$.
\end{enumerate}
\end{theorem}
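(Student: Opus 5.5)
The plan is to prove (i)$\Rightarrow$(iii)$\Rightarrow$(ii)$\Rightarrow$(i). The implication (iii)$\Rightarrow$(ii) is trivial. Everything reduces to Proposition~\ref{prop:gdelta-equivalence}, which says that for a fixed kernel $Y$ the trace condition ``$H\cap Y$ is $G_\delta$ in $H$'' is equivalent to ``$B\setminus Y$ is $F_\sigma$ in $B$''.

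For (i)$\Rightarrow$(iii), take a closed Lindel\"of $\Sigma$ kernel $Y$. Then $B=Y$, so $B\setminus Y=\varnothing$ and $H=\varnothing$. The empty set is dense and $G_\delta$ in the empty space, so (iii) holds vacuously. One should note that (iii) asks for density only in $H$, so the degenerate case is allowed. Density is automatic in general anyway: Corollary~\ref{cor:obs-identity} gives $H=B\setminus\operatorname{Int}_B(Y)$, and any nonempty relatively open subset of $H$ meets $B$ in a set whose trace meets $Y$, since $Y$ is dense in $B$. I would still verify this carefully rather than rely on the closed-kernel case alone.

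For (ii)$\Rightarrow$(i), fix $Y$ as in (ii). By Proposition~\ref{prop:gdelta-equivalence}, $Y=\bigcap_n G_n$ with each $G_n$ open in $B$. Choose $U_n$ open in $X$ with $U_n\cap B=G_n$. Set
\[
Y'=B=\overline Y^{\,X}.
\]
I claim $Y'$ is a closed Lindel\"of $\Sigma$ kernel. This splits into two checks.

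\emph{Kernel property.} If $U\supseteq B$ is open, then $U\supseteq Y$, so $X\setminus U$ is Lindel\"of $\Sigma$.

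\emph{$B$ is Lindel\"of $\Sigma$.} Write
\[
B=Y\cup\bigcup_n (B\setminus U_n),
\]
using $B\setminus Y=\bigcup_n(B\setminus G_n)$. Each $B\setminus U_n$ is contained in $X\setminus U_n$. That set is Lindel\"of $\Sigma$ because $U_n\supseteq Y$ is an open neighborhood of the kernel. Since $B\setminus U_n$ is closed in $X\setminus U_n$, it is Lindel\"of $\Sigma$ too. Then $B$ is a countable union of Lindel\"of $\Sigma$ subspaces, namely $Y$ and the sets $B\setminus U_n$. By countable-union stability \cite[p.~2576]{KubisOkunevSzeptycki2006}, $B$ is Lindel\"of $\Sigma$. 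This is the closure calculation alluded to from \cite[Proposition~3.10]{HuYin2026}.

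The only delicate point, and the main obstacle, is transferring the $G_\delta$ representation from $B$ to open sets of $X$ that contain $Y$. This works because each $G_n$ is the trace of an open set of $X$, and that open set automatically contains $Y$. Everything else is bookkeeping.
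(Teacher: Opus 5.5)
Your proof is correct and follows the paper's route. You use the same cycle (i)$\Rightarrow$(iii)$\Rightarrow$(ii)$\Rightarrow$(i) and take $B=\overline Y^{\,X}$ as the closed Lindel\"of $\Sigma$ kernel. You also write out, correctly via $B=Y\cup\bigcup_n(B\setminus U_n)$ with $Y\subseteq U_n$, the closure calculation that the paper imports from Hu--Yin's Proposition~3.10.

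One caveat: your aside that density of $H\cap Y$ in $H$ is automatic in general is false. For $X=[0,1]$ with kernel $Y=[0,1)$ one gets $H=\{1\}$ and $H\cap Y=\varnothing$. So (iii) rests on the degenerate case $H=\varnothing$ coming from a closed kernel, exactly as in the paper, and that aside should be dropped.
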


\begin{proof}
The implication \textup{(iii)}$\Rightarrow$\textup{(ii)} follows by dropping the density requirement.
Assume \textup{(ii)}, and choose a Lindel\"of $\Sigma$ kernel $Y$ witnessing it. Put $B=\overline Y^{\,X}$ and $H=\overline{B\setminus Y}^{\,B}$. Proposition~\ref{prop:gdelta-equivalence} shows that $Y$ is a $G_\delta$ subset of $B$. The closure calculation in the proof of \cite[Proposition~3.10]{HuYin2026} then gives that $B$ is Lindel\"of $\Sigma$. Since $B$ is closed in $X$, it remains only to check the kernel condition. If $U$ is open in $X$ and $B\subseteq U$, then $Y\subseteq U$, so $X\setminus U$ is Lindel\"of $\Sigma$ by the defining property of the kernel $Y$. Hence $B$ is a closed Lindel\"of $\Sigma$ kernel of $X$, and $X$ is CL charming.

For the remaining implication, assume \textup{(i)} and choose a closed Lindel\"of $\Sigma$ kernel $Y$. Put $B=\overline Y^{\,X}$ and $H=\overline{B\setminus Y}^{\,B}$. Then $B=Y$, so $H=\varnothing$. The empty subset of the empty space is both dense and $G_\delta$. Thus \textup{(iii)} holds.
\end{proof}

\begin{remark}\label{rem:old-core}
Under the notation of Corollary~\ref{cor:obs-identity}, the closed set in \cite[Theorem~3.36]{HuYin2026} is $\overline{B\setminus Y}^{\,B}$. The earlier theorem therefore states that $X$ is a $D$-space if and only if this boundary closure is a $D$-space, which is the first-stage case of Theorem~\ref{thm:core-iteration}. This identity also simplifies several earlier boundary criteria. The hypothesis of \cite[Proposition~3.24]{HuYin2026} can hold only when $B\setminus Y=\varnothing$. In \cite[Theorem~3.25 and Corollary~3.32]{HuYin2026}, the exceptional closed set must contain all of $B\setminus Y$, so those criteria reduce to the $G_\delta$-kernel situation handled in \cite[Proposition~3.10]{HuYin2026}. Proposition~\ref{prop:gdelta-equivalence} shows that \cite[Proposition~3.38]{HuYin2026} is equivalent to that $G_\delta$-kernel criterion. Theorem~\ref{thm:CL-equivalence} also shows that \cite[Conjecture~3.42]{HuYin2026} is equivalent to the assertion that every charming space is CL charming. The present paper does not decide this reformulated assertion.
\end{remark}

\subsection{The transfinite closed-core iteration}

This reduction can be iterated while the original kernel remains fixed. The underlying trace derivative is classical. For a topological space $X$ and $A\subseteq X$, write
\[
\rho_X(A)=A\cap\overline{\,\overline A^{\,X}\setminus A\,}^{\,X}
\]
for the Hausdorff residue of $A$ \cite[p.~281]{Hausdorff1914}. Kuratowski \cite[Section~12]{Kuratowski1966} treats the transfinite residue theory.

\begin{definition}\label{def:core-operator}
Let $X$ be a charming space with a fixed Lindel\"of $\Sigma$ kernel $Y$. For each closed set $F\subseteq X$, define
\[
T(F)=\overline{\,\overline{Y\cap F}^{\,X}\setminus Y\,}^{\,X}.
\]
\end{definition}

The fixed-kernel trace of $T$ is the Hausdorff residue, and $T$ also preserves finite unions. The following proposition states these operator properties together with the fixed-point criterion.

\begin{proposition}\label{prop:core-fixed-points}
Under the notation of Definition~\ref{def:core-operator}, the operator $T$ is monotone and satisfies $T(F)\subseteq F$ for every closed $F\subseteq X$. For closed sets $F,G\subseteq X$,
\[
T(F\cup G)=T(F)\cup T(G).
\]
If $Y\cap F=Y\cap G$, then $T(F)=T(G)$. Moreover, for every closed $F\subseteq X$,
\[
Y\cap T(F)=\rho_X(Y\cap F).
\]
The equality $T(F)=F$ holds if and only if both $Y\cap F$ and $F\setminus Y$ are dense in $F$.
\end{proposition}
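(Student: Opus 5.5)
The plan is to prove each clause directly from the definition
\[
T(F)=\overline{\,\overline{Y\cap F}^{\,X}\setminus Y\,}^{\,X},
\]
using only elementary closure calculus; no earlier result beyond closedness of $F$ is needed.

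\emph{Monotonicity and $T(F)\subseteq F$.} If $F\subseteq G$, then $Y\cap F\subseteq Y\cap G$. Both closure operations and the removal of $Y$ preserve inclusion, so $T(F)\subseteq T(G)$. For the second claim, $\overline{Y\cap F}^{\,X}\subseteq F$ because $F$ is closed. Hence $\overline{Y\cap F}^{\,X}\setminus Y\subseteq F$, and closing again stays inside $F$.

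\emph{Finite unions and dependence on the trace.} We have $Y\cap(F\cup G)=(Y\cap F)\cup(Y\cap G)$, and closure commutes with finite unions. Removing $Y$ distributes over unions, and a second closure commutes with finite unions again, so $T(F\cup G)=T(F)\cup T(G)$. The statement that $Y\cap F=Y\cap G$ forces $T(F)=T(G)$ is immediate, since $T(F)$ depends only on $Y\cap F$.

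\emph{The residue identity.} Put $A=Y\cap F$. Then
\[
\overline A^{\,X}\setminus A=\overline A^{\,X}\setminus Y,
\]
because $\overline A^{\,X}\cap Y\supseteq A$ and any point of $\overline A^{\,X}\cap Y$ lies in $F\cap Y=A$, as $\overline A^{\,X}\subseteq F$. Hence $T(F)=\overline{\overline A^{\,X}\setminus A}^{\,X}$. Intersecting with $Y$, and noting $T(F)\subseteq F$ so that $Y\cap T(F)=A\cap T(F)$, gives
\[
Y\cap T(F)=A\cap\overline{\overline A^{\,X}\setminus A}^{\,X}=\rho_X(A).
\]

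\emph{The fixed-point criterion.} This is the only step requiring care.

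Suppose first that $Y\cap F$ and $F\setminus Y$ are both dense in $F$. Then $\overline{Y\cap F}^{\,X}=F$, since $F$ is closed. So $T(F)=\overline{F\setminus Y}^{\,X}=F$, by density of $F\setminus Y$ in the closed set $F$.

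Conversely, suppose $T(F)=F$. Then
\[
F\subseteq\overline{\overline{Y\cap F}^{\,X}\setminus Y}^{\,X}\subseteq\overline{F\setminus Y}^{\,X},
\]
so $F\setminus Y$ is dense in $F$. Also $T(F)\subseteq\overline{\overline{Y\cap F}^{\,X}}^{\,X}=\overline{Y\cap F}^{\,X}$, which gives $F\subseteq\overline{Y\cap F}^{\,X}$, so $Y\cap F$ is dense in $F$.

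Density in $F$ is the same as density with respect to closure in $X$, because $F$ is closed. The subtle point to watch is that $T(F)$ is defined via closures in $X$ rather than in $F$; closedness of $F$ makes the two notions agree, and it is used in every step.
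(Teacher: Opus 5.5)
Your proof is correct and matches the paper's argument essentially step for step: direct closure calculus for monotonicity, $T(F)\subseteq F$, finite unions and trace dependence; the observation $\overline{Y\cap F}^{\,X}\cap Y=Y\cap F$ (from closedness of $F$) for the residue identity; and the same pair of inclusions $T(F)\subseteq\overline{Y\cap F}^{\,X}$ and $\overline{Y\cap F}^{\,X}\setminus Y\subseteq F\setminus Y$ for the fixed-point criterion.
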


\begin{proof}
Monotonicity and trace dependence follow directly from the definition. Let $F\subseteq X$ be closed and put $A=Y\cap F$. Since $\overline A^{\,X}\subseteq F$, one has $T(F)\subseteq F$ and $\overline A^{\,X}\cap Y=A$. Hence $\overline A^{\,X}\setminus Y=\overline A^{\,X}\setminus A$ and $Y\cap T(F)=A\cap T(F)$, so
\[
Y\cap T(F)=A\cap\overline{\,\overline A^{\,X}\setminus A\,}^{\,X}=\rho_X(A).
\]
For closed $F,G\subseteq X$, finite unions commute with the two closure operations and with subtraction of the fixed set $Y$, which gives
\[
T(F\cup G)=T(F)\cup T(G).
\]
If $T(F)=F$, then
\[
F=T(F)\subseteq\overline{Y\cap F}^{\,X}\subseteq F,
\]
so $Y\cap F$ is dense in $F$. The set $\overline{Y\cap F}^{\,X}\setminus Y\subseteq F\setminus Y$ has closure $T(F)=F$, so $F\setminus Y$ is dense in $F$. Conversely, if both sets are dense in $F$, then $\overline{Y\cap F}^{\,X}=F$ and therefore
\[
T(F)=\overline{F\setminus Y}^{\,X}=F.
\]
\end{proof}

Consequently, if the operator $T$ is iterated by applying $T$ at successor stages and taking intersections at nonzero limit stages, the traces follow Hausdorff's residue iteration. At each successor stage the trace is replaced by its $\rho_X$-residue. At every nonzero limit stage the preceding traces are intersected. The operator $T$ is the closed ambient counterpart that links this trace iteration to the charming-space $D$-reduction.

Closed-subspace inheritance for charming spaces is the $(L\Sigma,L\Sigma)$-case of \cite[Proposition~3.2\textup{(2)}]{CasarrubiasPaniagua2016}. The lemma below combines the inherited-kernel statement with the additional empty-core consequence needed in the iteration.

\begin{lemma}\label{lem:closed-restriction-charming}
Under the notation of Definition~\ref{def:core-operator}, if $F$ is closed in $X$, then $F$ is charming with Lindel\"of $\Sigma$ kernel $Y\cap F$. If, in addition, $T(F)=\varnothing$, then $Y\cap F$ is closed in $F$, so $F$ is CL charming and hence a $D$-space.
\end{lemma}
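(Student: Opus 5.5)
The first claim is a direct verification from the definitions. I will first check that $Y\cap F$ is Lindel\"of $\Sigma$: it is closed in $Y$, and Lindel\"of $\Sigma$-spaces are closed under closed subspaces. Next, let $U$ be open in $F$ with $Y\cap F\subseteq U$. Write $U=W\cap F$ with $W$ open in $X$, and put $W'=W\cup(X\setminus F)$. This set is open in $X$ and contains $Y$, because points of $Y\setminus F$ lie in $X\setminus F$ and points of $Y\cap F$ lie in $U\subseteq W$. The kernel property of $Y$ then makes $X\setminus W'$ Lindel\"of $\Sigma$, and
\[
X\setminus W'=F\setminus W=F\setminus U.
\]
So $F\setminus U$ is Lindel\"of $\Sigma$, and $Y\cap F$ is a Lindel\"of $\Sigma$ kernel of $F$. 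Alternatively one can simply cite the $(L\Sigma,L\Sigma)$ case of \cite[Proposition~3.2(2)]{CasarrubiasPaniagua2016}.

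For the second claim, assume $T(F)=\varnothing$ and put $A=Y\cap F$. By definition, $T(F)=\overline{\overline A^{\,X}\setminus Y}^{\,X}$, so $T(F)=\varnothing$ forces $\overline A^{\,X}\setminus Y=\varnothing$, that is, $\overline A^{\,X}\subseteq Y$. As noted in the proof of Proposition~\ref{prop:core-fixed-points}, $\overline A^{\,X}\subseteq F$ because $F$ is closed. Hence $\overline A^{\,X}\subseteq Y\cap F=A$, so $A$ is closed in $X$ and therefore closed in $F$.

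It follows that $F$ has the closed Lindel\"of $\Sigma$ kernel $A$, so $F$ is CL charming in the sense of \cite[Definition~4.1]{LiLinLin2017}. Every CL charming space is a $D$-space by \cite[Corollary~3.4]{HuYin2026}. If one prefers a self-contained argument, Theorem~\ref{thm:exact-criterion} applied to $F$ with kernel $A$ gives the same conclusion. Given a neighborhood assignment on $F$, the closed Lindel\"of $\Sigma$ space $A$ is a $D$-space, so the restricted assignment has a closed discrete kernel $E\subseteq A$. By Lemma~\ref{lem:closed-subspace-cd}, $E$ is closed discrete in $F$, and it satisfies $A\subseteq\varphi[E]$, which is condition (ii) of that theorem.

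No step is a real obstacle. The only point needing care is the translation of the open set $U$ of $F$ into an open neighborhood of $Y$ in $X$, which is handled by adjoining $X\setminus F$ as above.
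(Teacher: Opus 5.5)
Your proof is correct and follows the paper's argument. The second part, deducing $\overline{Y\cap F}^{\,X}\subseteq Y\cap F$ from $T(F)=\varnothing$ and the closedness of $F$, is identical to the paper's. The first part differs in two minor ways. You verify the kernel property of $Y\cap F$ directly via the open set $W\cup(X\setminus F)$, which covers the case $Y\cap F=\varnothing$ uniformly, whereas the paper cites \cite[Proposition~3.2(2)]{CasarrubiasPaniagua2016} and treats the empty trace separately. Your optional derivation of the $D$-property through Theorem~\ref{thm:exact-criterion} is also a valid self-contained substitute for the citation of \cite[Corollary~3.4]{HuYin2026}.
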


\begin{proof}
If $Y\cap F\ne\varnothing$, the proof of \cite[Proposition~3.2\textup{(2)}]{CasarrubiasPaniagua2016} gives $Y\cap F$ as a Lindel\"of $\Sigma$ kernel of $F$. If $Y\cap F=\varnothing$, then $X\setminus F$ is an open neighborhood of $Y$, so $F$ is Lindel\"of $\Sigma$. For every open $V\subseteq F$, the set $F\setminus V$ is closed in $F$ and hence Lindel\"of $\Sigma$. Therefore $\varnothing=Y\cap F$ is a Lindel\"of $\Sigma$ kernel of $F$. Thus $F$ is charming with kernel $Y\cap F$.

Now suppose $T(F)=\varnothing$. Then
\[
\overline{Y\cap F}^{\,X}\setminus Y=\varnothing.
\]
Since $F$ is closed, $\overline{Y\cap F}^{\,X}\subseteq F$, and therefore
\[
\overline{Y\cap F}^{\,X}=Y\cap F.
\]
Hence $Y\cap F$ is closed in $F$. It is thus a closed Lindel\"of $\Sigma$ kernel of $F$, so $F$ is CL charming and consequently a $D$-space.
\end{proof}

The limit step uses the classical countable closed-addition theorem for $D$-spaces. Guo and Junnila proved a transfinite addition theorem for covers by $D$-subspaces with closed initial unions \cite{GuoJunnila2011}. The argument below needs only the countable closed-addition case and the following elementary intersection lemma.

\begin{lemma}\label{lem:closed-intersection}
Let $Z$ be a regular Lindel\"of space, and let $\{H_i:i\in I\}$ be a family of closed subsets of $Z$. Suppose that, for every $i\in I$, each closed subset of $Z$ disjoint from $H_i$ is a $D$-space. Then every closed subset of $Z$ disjoint from $\bigcap_{i\in I}H_i$ is a $D$-space.
\end{lemma}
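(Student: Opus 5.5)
Let $F\subseteq Z$ be closed with $F\cap\bigcap_{i\in I}H_i=\varnothing$. The plan is to cover $F$ by countably many open pieces, each of which is either disjoint from one of the $H_i$ or can be handled by a finite intersection, and then reassemble with the countable closed-addition theorem for $D$-spaces.

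First, reduce to a countable subfamily. The sets $Z\setminus H_i$, $i\in I$, form an open cover of $F$. Since $F$ is closed in the Lindel\"of space $Z$, it is Lindel\"of, so there are countably many indices $i_n$ ($n\in\omega$) with $F\subseteq\bigcup_n(Z\setminus H_{i_n})$.

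Next, shrink the cover. For each $x\in F$, choose $n$ with $x\notin H_{i_n}$. By regularity, pick an open $W_x\ni x$ with $\overline{W_x}\cap H_{i_n}=\varnothing$. By Lindel\"ofness of $F$, countably many $W_{x_k}$ cover $F$. Put $F_k=F\cap\overline{W_{x_k}}$. Each $F_k$ is closed in $Z$ and disjoint from some $H_{i_n}$, so by hypothesis each $F_k$ is a $D$-space. Thus
\[
F=\bigcup_{k\in\omega}F_k
\]
is a countable union of closed $D$-subspaces.

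Finally, conclude with the countable closed-addition theorem. If a space is a countable union of closed $D$-subspaces, it is a $D$-space; this is the countable case of the Guo--Junnila addition theorem \cite{GuoJunnila2011}, and it is also directly classical. Applying it to $F=\bigcup_kF_k$ gives that $F$ is a $D$-space.

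The only delicate point is that this addition theorem needs the pieces closed in $F$, which is exactly why regularity is used to replace the open sets $Z\setminus H_{i_n}$ by closures of smaller neighborhoods. If a self-contained argument is preferred, I would do a transfinite (here $\omega$-length) kernel construction. Given a neighborhood assignment $\varphi$ on $F$, put $G_k=F_0\cup\dots\cup F_k$, which is closed. Pick a closed discrete $D_k\subseteq F_k\setminus\varphi[D_0\cup\dots\cup D_{k-1}]$ that covers that closed remainder, using that closed subsets of the $D$-space $F_k$ are $D$-spaces (Lemma~\ref{lem:closed-hereditary-D}). Then check that $\bigcup_kD_k$ is closed discrete: any point lies in some $G_k$, and once it is covered by $\varphi[D_0\cup\dots\cup D_k]$, that open set misses all later $D_j$. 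So locally only finitely many of the $D_j$ meet, and local finiteness gives closed discreteness.
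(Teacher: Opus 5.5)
Your proof is correct and follows the paper's argument: use regularity to shrink a neighborhood of each point of the closed set so that its closure misses a single $H_i$, take countably many of these by Lindel\"ofness, and finish with the countable closed-addition theorem for $D$-spaces. The paper cites that theorem from Borges--Wehrly, and your optional $\omega$-step kernel construction proves it correctly. Your preliminary reduction to a countable subfamily of the $H_i$ is harmless but unnecessary.
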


\begin{proof}
A countable union of closed $D$-subspaces is a $D$-space \cite[Proposition~7]{BorgesWehrly1991}. Let $C$ be closed in $Z$ and disjoint from $\bigcap_{i\in I}H_i$. If $C=\varnothing$, there is nothing to prove. For each $x\in C$, choose $i(x)\in I$ with $x\notin H_{i(x)}$. By regularity, there is an open neighborhood $V_x$ of $x$ in $C$ with
\[
\overline{V_x}^{\,C}\subseteq C\setminus H_{i(x)}.
\]
The Lindel\"of property of $C$ gives a countable set $A\subseteq C$ such that $\{V_x:x\in A\}$ covers $C$. For each $x\in A$, the set $\overline{V_x}^{\,C}$ is closed in $Z$ and misses $H_{i(x)}$, hence is a $D$-space. These closed subspaces cover $C$, so the cited countable closed-addition theorem shows that $C$ is a $D$-space.
\end{proof}

The following is the closed-set formulation of the gluing lemma of Hu and Yin. Its hypothesis is equivalent to theirs by taking complements of open neighborhoods of $K$.

\begin{lemma}[{\cite[Lemma~3.1]{HuYin2026}}]\label{lem:d-gluing}
Let $K$ be a closed $D$-subspace of a space $Z$. If every closed subset of $Z$ disjoint from $K$ is a $D$-space, then $Z$ is a $D$-space.
\end{lemma}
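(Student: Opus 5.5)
The plan is a direct gluing argument. Fix a neighborhood assignment $\varphi$ on $Z$. First I use that $K$ is a closed $D$-subspace. I apply its $D$-property to the restricted assignment $x\mapsto\varphi(x)\cap K$ on $K$. This gives a set $E\subseteq K$ that is closed discrete in $K$ and satisfies $K\subseteq\varphi[E]$. By Lemma~\ref{lem:closed-subspace-cd}, the set $E$ is closed discrete in $Z$, because $K$ is closed in $Z$.

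Next I pass to the residual set
\[
R=Z\setminus\varphi[E].
\]
The set $\varphi[E]$ is open, so $R$ is closed in $Z$. Since $K\subseteq\varphi[E]$, the set $R$ is disjoint from $K$. By hypothesis $R$ is therefore a $D$-space. I apply its $D$-property to $x\mapsto\varphi(x)\cap R$. This gives a set $F\subseteq R$ that is closed discrete in $R$, hence closed discrete in $Z$ by Lemma~\ref{lem:closed-subspace-cd}, with $R\subseteq\varphi[F]$.

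Finally I combine the two pieces, exactly as in the proof of Theorem~\ref{thm:exact-criterion}. Each $e\in E$ lies in $\varphi(e)\subseteq\varphi[E]$, so $E\cap R=\varnothing$, and hence $E\cap F=\varnothing$. Lemma~\ref{lem:finite-union-cd} then makes $E\cup F$ closed discrete in $Z$, and
\[
Z=\varphi[E]\cup R\subseteq\varphi[E\cup F].
\]
So $E\cup F$ is a closed discrete kernel of $\varphi$, and $Z$ is a $D$-space.

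I do not expect a real obstacle. The only point needing care is that $R$ must be closed and disjoint from $K$ for the hypothesis to apply, and both follow because $\varphi[E]$ is an open set containing $K$. The remark about equivalence with the open-neighborhood formulation is just complementation: closed sets disjoint from $K$ are exactly the complements of open sets containing $K$.
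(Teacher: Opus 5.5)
Your proof is correct. The paper does not prove this lemma itself (it cites Hu--Yin), but your argument is exactly the gluing argument the paper runs in the proof of Theorem~\ref{thm:exact-criterion}: cover $K$ by a closed discrete $E$, apply the hypothesis to the closed residual $Z\setminus\varphi[E]$, which misses $K$, and merge the two sets via Lemmas~\ref{lem:closed-subspace-cd} and~\ref{lem:finite-union-cd}.
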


For a space $Z$, let $hL(Z)$ denote the hereditary Lindel\"of number, namely the least infinite cardinal $\mu$ such that every open cover of every subspace of $Z$ has a subcover of size at most $\mu$ \cite[p.~183]{Good2003}. Thus $hL(Z)\leq\max\{\omega,|Z|\}$, and $hL(Z)=\omega$ exactly when $Z$ is hereditarily Lindel\"of. The stabilization estimate below comes directly from this definition. In the proof, it is applied to the increasing family of portions of the fixed kernel removed along the closed-core iteration.

\Needspace{15\baselineskip}
\begin{theorem}\label{thm:core-iteration}
Let $X$ be a charming space with a fixed Lindel\"of $\Sigma$ kernel $Y$, and let $T$ be the operator from Definition~\ref{def:core-operator}. Put $B=\overline Y^{\,X}$. Then $T(X)=\overline{B\setminus Y}^{\,X}=\Core(Y,B)$ because $B$ is closed in $X$. Thus the first iterate is exactly the closed core $\Core(Y,B)$ from Corollary~\ref{cor:obs-identity}. Set $F_0=X$, $F_{\alpha+1}=T(F_\alpha)$, and
\[
F_\lambda=\bigcap_{\alpha<\lambda}F_\alpha
\qquad(\lambda\text{ a nonzero limit ordinal}).
\]
For every ordinal $\alpha$, the space $X$ is a $D$-space if and only if $F_\alpha$ is a $D$-space. The sequence stabilizes at some $\gamma<hL(Y)^+$, and hence at some $\gamma<(\max\{\omega,|Y|\})^+$. Its terminal value $F_*=F_\gamma=F_{\gamma+1}$ is the largest closed subset $F\subseteq X$ for which both $Y\cap F$ and $F\setminus Y$ are dense in $F$.

If $Y$ is hereditarily Lindel\"of, then $\gamma$ may be chosen countable. If $X$ is hereditarily Lindel\"of, then for every ordinal $\alpha$, the space $X\setminus F_\alpha$ is Lindel\"of $\Sigma$, and $X$ is Lindel\"of $\Sigma$ if and only if $F_\alpha$ is Lindel\"of $\Sigma$.
\end{theorem}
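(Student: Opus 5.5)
The proof runs by transfinite induction on $\alpha$, driven by the operator properties in Proposition~\ref{prop:core-fixed-points}. The sequence $(F_\alpha)$ is a decreasing sequence of closed sets, since $T(F)\subseteq F$ and limits are intersections. First I will record the identity $T(X)=\overline{\overline Y^{\,X}\setminus Y}^{\,X}$. Because $B$ is closed, this closure equals $\overline{B\setminus Y}^{\,B}=\Core(Y,B)$, which is Corollary~\ref{cor:obs-identity}.

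\textbf{The $D$-equivalence.} I will prove the stronger inductive statement $(\ast)_\alpha$: every closed subset of $X$ disjoint from $F_\alpha$ is a $D$-space.
\begin{itemize}
\item[] \emph{Stage $0$.} $(\ast)_0$ is trivial.
\item[] \emph{Successor step.} Let $C$ be closed and disjoint from $F_{\alpha+1}=T(F_\alpha)$. Then $C\cup F_\alpha$ is closed. The set $C\cap F_\alpha$ is closed in $F_\alpha$ and disjoint from $T(F_\alpha)$, so by trace dependence and monotonicity $T(C\cap F_\alpha)\subseteq T(F_\alpha)\cap C\cap F_\alpha=\varnothing$. Lemma~\ref{lem:closed-restriction-charming} then makes $C\cap F_\alpha$ a $D$-space. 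Write $C=(C\cap F_\alpha)\cup\overline{C\setminus F_\alpha}$. Since $X$ is regular Lindel\"of, the open set $C\setminus F_\alpha$ is a countable union of closed sets disjoint from $F_\alpha$, each of which is $D$ by $(\ast)_\alpha$. The countable closed-addition theorem \cite[Proposition~7]{BorgesWehrly1991} then makes $C$ a $D$-space.
\item[] \emph{Limit step.} This is exactly Lemma~\ref{lem:closed-intersection}, applied with $Z=X$ and $H_\alpha=F_\alpha$.
\end{itemize}
Given $(\ast)_\alpha$: if $X$ is $D$, then so is the closed set $F_\alpha$ by Lemma~\ref{lem:closed-hereditary-D}. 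Conversely, if $F_\alpha$ is $D$, Lemma~\ref{lem:d-gluing} with $K=F_\alpha$ shows $X$ is $D$.

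\textbf{Stabilization.} Put $A_\alpha=Y\cap F_\alpha$. If $F_{\alpha+1}\neq F_\alpha$, then $A_{\alpha+1}\neq A_\alpha$. Otherwise trace dependence gives $F_{\alpha+2}=F_{\alpha+1}$, and I will argue that once the traces agree the closed sets agree from the next stage on. To make this clean I will index the strict decreases of the traces $A_\alpha$. Each $A_\alpha$ is relatively closed in $Y$, because $F_\alpha$ is closed. A strictly decreasing sequence of closed subsets of $Y$ of length $hL(Y)^+$ yields a strictly increasing open family $\{Y\setminus A_\alpha\}$. Picking points $y_\alpha\in A_\alpha\setminus A_{\alpha+1}$ gives the subspace $\{y_\alpha\}$ with the open cover $\{Y\setminus A_{\alpha+1}\}$, which has no subcover of size $hL(Y)$, contradicting the definition of $hL(Y)$. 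Hence the traces stabilize before $hL(Y)^+$, and therefore so does $(F_\alpha)$ (one further step, still below $hL(Y)^+$, as that cardinal is a limit). In particular $\gamma$ is countable when $hL(Y)=\omega$.

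\textbf{The terminal value.} By Proposition~\ref{prop:core-fixed-points}, $F_*$ satisfies the density condition. If $F$ is closed and both $Y\cap F$ and $F\setminus Y$ are dense in $F$, then $T(F)=F$, and induction with monotonicity gives $F\subseteq F_\alpha$ for all $\alpha$. So $F_*$ is the largest such set.

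\textbf{The hereditarily Lindel\"of clause.} I will prove by induction that $X\setminus F_\alpha$ is Lindel\"of $\Sigma$. The open set $X\setminus F_\alpha$ is a countable union of closed sets, each disjoint from $F_\alpha$. Every closed set disjoint from $F_\alpha$ should be Lindel\"of $\Sigma$ by the same successor and limit pattern as above, replacing ``$D$'' by ``Lindel\"of $\Sigma$'' and using countable-union stability. The successor step is the delicate one. There, for a closed $C$ with $T(C)=\varnothing$, I need $C$ to be Lindel\"of $\Sigma$ as a countable union of closed pieces. These pieces are the closed trace $Y\cap C$, which is Lindel\"of $\Sigma$, and $C\setminus Y$, which is open in $C$ and therefore an $F_\sigma$ of closed sets missing $Y$. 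Each of those is Lindel\"of $\Sigma$ by the kernel property, and this is where hereditary Lindel\"ofness (perfect normality) is needed. Then $X=F_\alpha\cup(X\setminus F_\alpha)$ gives the final equivalence.

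The main obstacle is the stabilization count: one must ensure that strict decrease of the closed sets forces strict decrease of the traces, so that the $hL(Y)$ bound actually applies.
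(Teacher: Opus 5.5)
\textbf{The gap: the successor step for $(\ast)_{\alpha+1}$.} You claim that, because $X$ is regular Lindel\"of, the open set $C\setminus F_\alpha$ is a countable union of closed sets disjoint from $F_\alpha$. That would make $C\cap F_\alpha$ a $G_\delta$ in $C$. Regular Lindel\"of spaces, even compact ones, are not perfect, and nothing in the iteration makes $F_\alpha$ a $G_\delta$.

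Here is a concrete instance of the theorem where the step fails. Let $X=D\cup\{\infty\}$ be the one-point compactification of an uncountable discrete set $D$, and let $Y\subseteq D$ be countably infinite. Then $Y$ is a Lindel\"of $\Sigma$ kernel because $X$ is compact, and $B=Y\cup\{\infty\}$. The iteration gives $F_1=\{\infty\}$ and $F_2=\varnothing$. Take $\alpha=1$ and $C=X$. Then $C\setminus F_1=D$ is uncountable. Every closed subset of $X$ missing $\infty$ is a compact subset of $D$, hence finite. So $D$ is not a countable union of closed sets disjoint from $F_1$, and the countable closed-addition theorem cannot be invoked. The decomposition $C=(C\cap F_\alpha)\cup\overline{C\setminus F_\alpha}$ does not help either, since $\overline{C\setminus F_\alpha}$ generally meets $F_\alpha$ and so is not covered by $(\ast)_\alpha$. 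You later correctly say that perfect normality is what hereditary Lindel\"ofness supplies. Your $F_\sigma$ argument is valid in that clause, but not in the general $D$-part.

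\textbf{The repair.} This is what the paper does, using a lemma you already cite. You have shown that $K=C\cap F_\alpha$ is a closed $D$-subspace of $C$. Every closed subset of $C$ disjoint from $K$ is closed in $X$ and disjoint from $F_\alpha$, hence a $D$-space by $(\ast)_\alpha$. Lemma~\ref{lem:d-gluing}, applied with $Z=C$, then shows that $C$ is a $D$-space, with no $F_\sigma$ decomposition needed.

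\textbf{A smaller slip.} Your sentence ``if $F_{\alpha+1}\neq F_\alpha$ then $A_{\alpha+1}\neq A_\alpha$'' is off by one. For $X=[0,1]$ plus an isolated point and $Y=\mathbb Q\cap[0,1]$, one has $F_1=[0,1]\neq F_0$, yet the traces agree. Trace dependence actually gives that $A_{\alpha+1}=A_\alpha$ forces $F_{\alpha+2}=F_{\alpha+1}$, and this is what your count uses.

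\textbf{The rest.} With these corrections the remainder is sound. Your stabilization argument is the paper's, in contrapositive form. Your hereditarily Lindel\"of clause is a valid alternative route: you show by induction that every closed set disjoint from $F_\alpha$ is Lindel\"of $\Sigma$. The paper instead proves that each $F\setminus T(F)$ is Lindel\"of $\Sigma$ and writes $X\setminus F_\alpha$ as a union of the differences $F_\xi\setminus F_{\xi+1}$, which is countable only because $\gamma<\omega_1$. Your route does not need the countable stabilization stage.
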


\begin{proof}
The space $X$ is Lindel\"of by the observation in Section~\ref{sec:preliminaries}. By Proposition~\ref{prop:core-fixed-points}, the operator $T$ is monotone and satisfies $T(F)\subseteq F$ for every closed $F\subseteq X$. Hence, whenever $C\subseteq F\subseteq X$ are closed,
\[
T(C)\subseteq C\cap T(F).
\]
If $C\subseteq X$ is closed and $T(C)=\varnothing$, Lemma~\ref{lem:closed-restriction-charming} shows that $C$ is a $D$-space.

A transfinite induction shows that every closed subset of $X$ disjoint from $F_\alpha$ is a $D$-space. For $\alpha=0$, there is nothing to prove. At a successor stage, let $C$ be closed in $X$ and disjoint from $F_{\alpha+1}$. Put $K=C\cap F_\alpha$. Then
\[
T(K)\subseteq K\cap T(F_\alpha)=\varnothing,
\]
so $K$ is a $D$-space. Every closed subset of $C$ disjoint from $K$ is closed in $X$ and disjoint from $F_\alpha$, hence is a $D$-space by the induction hypothesis. Lemma~\ref{lem:d-gluing} gives that $C$ is a $D$-space. At a nonzero limit stage $\lambda$, Lemma~\ref{lem:closed-intersection}, applied to the family $\{F_\alpha:\alpha<\lambda\}$, gives the induction statement.

Each $F_\alpha$ is closed in $X$, so Lemma~\ref{lem:closed-hereditary-D} shows that the $D$-property of $X$ implies that of $F_\alpha$, while the converse follows from the induction statement and Lemma~\ref{lem:d-gluing}. Put $\mu=hL(Y)$ and
\[
U=\bigcup_{\alpha<\mu^+}(Y\setminus F_\alpha).
\]
The cover $\{Y\setminus F_\alpha:\alpha<\mu^+\}$ of $U$ has a subcover indexed by some $S\subseteq\mu^+$ with $|S|\leq\mu$. Take $S=\varnothing$ when $U=\varnothing$. Put $\beta=\sup S$ (so $\beta=0$ if $S=\varnothing$). Regularity of $\mu^+$ gives $\beta<\mu^+$. Since $\xi\leq\beta$ for every $\xi\in S$, monotonicity gives $U=\bigcup_{\xi\in S}(Y\setminus F_\xi)\subseteq Y\setminus F_\beta$, while the reverse inclusion follows from $\beta<\mu^+$ and the definition of $U$. Hence $U=Y\setminus F_\beta$. Since $\beta+1<\mu^+$, monotonicity gives $Y\setminus F_\beta\subseteq Y\setminus F_{\beta+1}\subseteq U=Y\setminus F_\beta$. Thus $Y\cap F_\beta=Y\cap F_{\beta+1}$. By the trace-dependence assertion in Proposition~\ref{prop:core-fixed-points}, $F_{\beta+1}=F_{\beta+2}$. Thus $\gamma=\beta+1<\mu^+$ is a stabilization stage, and the limit definition makes every subsequent term equal to $F_*=F_\gamma$. In particular, $F_*=T(F_*)$, so Proposition~\ref{prop:core-fixed-points} shows that both $Y\cap F_*$ and $F_*\setminus Y$ are dense in $F_*$. Conversely, if a closed set $F\subseteq X$ has both sets dense, the same proposition gives $T(F)=F$. Monotonicity and transfinite induction give $F\subseteq F_\alpha$ for every $\alpha$, so $F\subseteq F_*$.

If $Y$ is hereditarily Lindel\"of, then $hL(Y)=\omega$, so the stabilization stage $\gamma$ is countable.

Now suppose that $X$ is hereditarily Lindel\"of. Then $Y$ is hereditarily Lindel\"of, so $\gamma<\omega_1$ may be chosen. For a closed $F\subseteq X$, put $B_F=\overline{Y\cap F}^{\,X}$. For each $x\in F\setminus B_F$, regularity gives an open neighborhood $U_x$ in $F$ with $\overline{U_x}^{\,F}\subseteq F\setminus B_F$. Since $F\setminus B_F$ is Lindel\"of, choose a countable set $Q_0\subseteq F\setminus B_F$ such that $F\setminus B_F=\bigcup_{x\in Q_0}U_x$. Each $\overline{U_x}^{\,F}$ is closed in $X$ and disjoint from $Y$, hence Lindel\"of $\Sigma$. These closures still cover $F\setminus B_F$, so $F\setminus B_F$ is Lindel\"of $\Sigma$. Also, since $B_F\subseteq F$ and $B_F\setminus Y\subseteq T(F)$, one has $B_F\setminus T(F)=(Y\cap F)\setminus T(F)$. Put $O=(Y\cap F)\setminus T(F)$. The set $O$ is open in the regular hereditarily Lindel\"of space $Y\cap F$. For each $y\in O$, choose an open neighborhood $V_y$ in $Y\cap F$ such that $\overline{V_y}^{\,Y\cap F}\subseteq O$. Lindel\"ofness gives a countable set $Q_1\subseteq O$ such that $O=\bigcup_{y\in Q_1}V_y$. Hence $O=\bigcup_{y\in Q_1}\overline{V_y}^{\,Y\cap F}$. Since $Y\cap F$ is closed in the Lindel\"of $\Sigma$ space $Y$, each of these closures is Lindel\"of $\Sigma$, and countable stability gives that $O$ is Lindel\"of $\Sigma$. The equality $F\setminus T(F)=(F\setminus B_F)\cup O$ shows that $F\setminus T(F)$ is Lindel\"of $\Sigma$. Because the chain $(F_\xi)$ is decreasing, is continuous at nonzero limit stages, and is constant from stage $\gamma$ onward, for every $\alpha$ one has
\[
X\setminus F_\alpha
=\bigcup_{\xi<\min\{\alpha,\gamma\}}(F_\xi\setminus F_{\xi+1}).
\]
If $x\notin F_\alpha$, let $\eta$ be the least stage at which $x\notin F_\eta$. By continuity, $\eta$ cannot be a nonzero limit, so $\eta=\xi+1$ and $\xi<\alpha$. If $\alpha\geq\gamma$, constancy from stage $\gamma$ onward also gives $\xi<\gamma$. Hence $\xi<\min\{\alpha,\gamma\}$. Conversely, every $F_\xi\setminus F_{\xi+1}$ on the right is disjoint from $F_\alpha$ by monotonicity. This union is countable, so $X\setminus F_\alpha$ is Lindel\"of $\Sigma$. The final equivalence follows from closed heredity and finite unions.
\end{proof}

\begin{corollary}\label{cor:terminal-counterexample}
Let $X$ be a charming non-$D$ space with a fixed Lindel\"of $\Sigma$ kernel $Y$, and let $F_*$ be the terminal value in Theorem~\ref{thm:core-iteration}. Then $F_*$ is a nonempty closed charming non-$D$ subspace of $X$, and both $Y\cap F_*$ and $F_*\setminus Y$ are dense in $F_*$. In particular, every charming non-$D$ space contains a closed non-$D$ terminal core for the corresponding operator $T$.
\end{corollary}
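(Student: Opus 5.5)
The corollary is a direct readout of Theorem~\ref{thm:core-iteration} together with Lemma~\ref{lem:closed-restriction-charming}, so the plan is simply to collect the pieces in order.

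First, apply Theorem~\ref{thm:core-iteration} to $X$ and the fixed kernel $Y$. This gives a stage $\gamma<hL(Y)^+$ with $F_*=F_\gamma=F_{\gamma+1}$. The theorem states that, for every ordinal $\alpha$, $X$ is a $D$-space if and only if $F_\alpha$ is. Taking $\alpha=\gamma$ and using that $X$ is not a $D$-space, we conclude that $F_*$ is not a $D$-space.

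Next come the remaining properties of $F_*$.

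\begin{itemize}
\item \emph{Nonempty.} The empty space is trivially a $D$-space, since the empty set is a closed discrete kernel of the empty assignment. Hence $F_*\neq\varnothing$.
\item \emph{Closed.} $F_0=X$ is closed, $T$ produces closed sets by definition, and intersections of closed sets are closed. So every $F_\alpha$ is closed, in particular $F_*$.
\item \emph{Charming.} Lemma~\ref{lem:closed-restriction-charming} shows that the closed set $F_*$ is charming with Lindel\"of $\Sigma$ kernel $Y\cap F_*$.
\item \emph{Density.} Theorem~\ref{thm:core-iteration} identifies $F_*$ as the largest closed set in which both $Y\cap F$ and $F\setminus Y$ are dense. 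Alternatively, $T(F_*)=F_*$ and the fixed-point criterion of Proposition~\ref{prop:core-fixed-points} give the density of $Y\cap F_*$ and $F_*\setminus Y$ in $F_*$ directly.
\end{itemize}

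The final sentence of the corollary restates these facts. Any charming non-$D$ space has some Lindel\"of $\Sigma$ kernel $Y$, and the associated $T$ yields the terminal core $F_*$, which is a closed non-$D$ subspace fixed by $T$.

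There is no real obstacle here. The only point needing care is the nonemptiness argument: one must observe explicitly that $\varnothing$ is a $D$-space, so a non-$D$ terminal core cannot be empty.
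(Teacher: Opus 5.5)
Your proposal is correct and follows the paper's proof essentially step for step. Like the paper, you use the $D$-equivalence of Theorem~\ref{thm:core-iteration} at the terminal stage to get non-$D$ and hence nonemptiness, Lemma~\ref{lem:closed-restriction-charming} for charmingness with kernel $Y\cap F_*$, and Theorem~\ref{thm:core-iteration} or Proposition~\ref{prop:core-fixed-points} for the density assertions.
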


\begin{proof}
Theorem~\ref{thm:core-iteration} gives that $F_*$ is closed and that $X$ is a $D$-space if and only if $F_*$ is a $D$-space. Hence $F_*$ is non-$D$ and therefore nonempty. Lemma~\ref{lem:closed-restriction-charming} shows that $F_*$ is charming with kernel $Y\cap F_*$. The density assertions follow from Theorem~\ref{thm:core-iteration}, equivalently from Proposition~\ref{prop:core-fixed-points}.
\end{proof}

\begin{remark}\label{rem:stable-core}
Terminality alone does not force failure of the $D$-property. A nonempty terminal core can be a $D$-space. For example, let $X=[0,1]$ and $Y=\mathbb{Q}\cap[0,1]$. The countable set $Y$ is Lindel\"of $\Sigma$, and it is a Lindel\"of $\Sigma$ kernel of $X$ because every closed subset of $X$ is compact. Both $Y$ and its complement are dense, so for the operator $T$ associated with this kernel one has $T(X)=X$, although $X$ is a compact $D$-space. Thus Theorem~\ref{thm:core-iteration} reduces the general problem to terminal cores without deciding their $D$-property.
\end{remark}

\subsection{Sharpness of the stabilization bound}

Hausdorff's residue operation is classical \cite[p.~281]{Hausdorff1914}, and Kuratowski treats residues of transfinite order in \cite[Section~12]{Kuratowski1966}. The construction below realizes this residue iteration within the closed-core reduction while keeping the ambient space compact and the kernel dense, $\sigma$-compact, and Lindel\"of $\Sigma$. It also prescribes both the cardinality and the hereditary Lindel\"of number of the kernel. The construction uses Cantor--Bendixson ranks of ordinal compacta. Caicedo and Hilton \cite[Definition~2.4]{CaicedoHilton2017} give the Cantor-normal-form description used below. For an iteration $(F_\alpha)$ as in Theorem~\ref{thm:core-iteration}, the \emph{first stabilization stage} is the least ordinal $\gamma$ such that $F_\gamma=F_{\gamma+1}$.

\Needspace{9\baselineskip}
\begin{theorem}\label{thm:uncountable-core-rank}
For every infinite cardinal $\kappa$ and every ordinal $\gamma<\kappa^+$, there is a compact space $X$ with a dense $\sigma$-compact Lindel\"of $\Sigma$ kernel $Y$ such that $hL(Y)=|Y|=\kappa$ and the iteration in Theorem~\ref{thm:core-iteration} first stabilizes at $\gamma$.
\end{theorem}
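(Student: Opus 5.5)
The plan is to build $X$ as a product $X=K\times A(\kappa)$. Here $A(\kappa)=\kappa\cup\{\infty\}$ is the one-point compactification of the discrete space $\kappa$, and $K$ is a compact space of cardinality at most $\kappa$, either scattered (an ordinal compactum) or, in the case $\gamma=0$, the interval $[0,1]$. The kernel is $Y=A\times A(\kappa)$ for a suitable countable dense set $A\subseteq K$.

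The cardinal requirements are then automatic:
\begin{itemize}
\item Since $A$ is countable, $Y=\bigcup_{a\in A}\{a\}\times A(\kappa)$ is $\sigma$-compact, hence Lindel\"of $\Sigma$.
\item Since $X$ is compact, every closed subset of $X$ is compact and so Lindel\"of $\Sigma$. Hence every Lindel\"of $\Sigma$ subspace is a kernel, and in particular $Y$ is.
\item $Y$ is dense when $A$ is dense.
\item $|Y|=\kappa$.
\item $hL(Y)=\kappa$, because $\{a\}\times\kappa$ is a discrete subspace of size $\kappa$ and $|Y|=\kappa$.
\end{itemize}

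Next I would reduce the iteration to $K$. Since $A(\kappa)$ is compact, closures and set differences of sets of the form $S\times A(\kappa)$ are computed coordinatewise. Hence $T_X(F\times A(\kappa))=T_K(F)\times A(\kappa)$, where $T_K$ is the operator of Definition~\ref{def:core-operator} for the kernel $A$ in $K$. Intersections at limit stages commute with the product as well. So $F_\alpha=F^K_\alpha\times A(\kappa)$ for all $\alpha$, and the first stabilization stage of $X$ equals that of $K$. By Proposition~\ref{prop:core-fixed-points}, the traces follow Hausdorff's residue iteration $A_{\alpha+1}=\rho_K(A_\alpha)$, with intersections at limit stages. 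I must therefore produce, for each $\gamma<\kappa^+$, a compact $K$ with $|K|\le\kappa$ and a countable dense $A\subseteq K$ whose iteration first stabilizes at $\gamma$.

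For $\gamma=0$, take $K=[0,1]$ and $A=\mathbb Q\cap[0,1]$. Both $A$ and its complement are dense, so $T_K(K)=K$, as in Remark~\ref{rem:stable-core}.

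For $\gamma\ge1$, take $K=\delta+1$, a countable-or-size-$\le\kappa$ ordinal compactum of suitably large Cantor--Bendixson rank. Let $A$ be the set of points whose Cantor--Bendixson rank, written in Cantor normal form as in \cite[Definition~2.4]{CaicedoHilton2017}, has even finite part. This makes $A$ dense, since it contains all isolated points. Then:
\begin{itemize}
\item $\overline A\setminus A$ consists of the odd-finite-part points that are limits of even ones.
\item Each residue step removes exactly the bottom two levels of each ``$\omega$-block'' of ranks. More precisely, it removes the least even level together with the odd level just above it, and a shifted version of the same structure reappears in the residue.
\item At a limit stage $\lambda$, the residue $A_\lambda$ consists of the $A$-points of rank at least $\omega\cdot\lambda$, in the appropriate indexing.
\end{itemize}
Choosing $\delta$ with rank $\omega\cdot\gamma$, up to a fine adjustment of the top level, should make $A_\alpha\ne A_{\alpha+1}$ for $\alpha<\gamma$ and $A_\gamma=\varnothing$. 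Then $F_\gamma=\overline{A_\gamma}\setminus\ldots=\varnothing=F_{\gamma+1}$ and $F_\alpha\supsetneq F_{\alpha+1}$ before $\gamma$. Since $\gamma<\kappa^+$, such an ordinal $\delta$ has cardinality at most $\kappa$. This keeps $|K|\le\kappa$, but $A$ must be countable for $\sigma$-compactness. So I would instead fix $A$ to consist of the isolated points along a countable cofinal skeleton, or replace $A$ by a $\sigma$-compact set such as a countable union of closed copies. If countability of $A$ fails for uncountable $\gamma$, I would drop it and require only that $A$ be $\sigma$-compact, which the ordinal setting permits, since ordinal compacta are compact and $\sigma$-compact subsets are easy to arrange via closed initial segments. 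Then $Y=A\times A(\kappa)$ is still $\sigma$-compact, and the cardinalities remain at most $\kappa$.

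The main obstacle is this exact calibration. I need a residue computation in scattered compacta showing that the first stabilization stage of the iteration (intersections at limit stages) is exactly $\gamma$, neither earlier nor later. This must hold for all $\gamma<\kappa^+$ simultaneously, with a kernel that remains $\sigma$-compact and dense. I would prove by transfinite induction on $\alpha$ an explicit formula for $A_\alpha$ in terms of Cantor--Bendixson ranks. I expect successor stages to be a direct computation of $\rho_K$, with limits handled by continuity of the intersections. Strictness before $\gamma$ is witnessed by a point of rank exactly at the removed level.
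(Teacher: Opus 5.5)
Your reduction to $K$ is sound. Because the kernel is $A\times A(\kappa)$, closures and differences are computed factorwise, so $F_\alpha=F^K_\alpha\times A(\kappa)$. The factor $A(\kappa)$ gives $|Y|=hL(Y)=\kappa$, which is a legitimate substitute for the paper's extra clopen summand $[0,\kappa]$, and your case $\gamma=0$ is correct.

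\textbf{Limit stages.} The gap is in what you call the calibration, and it is structural: no scattered compact $K$ can realize a nonzero limit $\gamma$, whatever kernel you choose. Suppose $\gamma$ is the first stabilization stage. Then $T(F_\gamma)=F_\gamma$, so by Proposition~\ref{prop:core-fixed-points} both $Y\cap F_\gamma$ and $F_\gamma\setminus Y$ are dense in $F_\gamma$. Hence $F_\gamma$ has no isolated points, and in a scattered space this forces $F_\gamma=\varnothing$. But if $\gamma$ is a limit, minimality gives $F_\alpha\neq\varnothing$ for every $\alpha<\gamma$. A decreasing family of nonempty closed subsets of a compact space has nonempty intersection, so $F_\gamma\neq\varnothing$, a contradiction. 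Since $A(\kappa)$ is scattered, $K\times A(\kappa)$ is scattered whenever $K$ is, so your ordinal-compactum plan fails for every limit $\gamma$. The missing ingredient is a non-scattered piece that survives exactly until stage $\lambda$. The paper glues $I=[0,1]$, with kernel trace $\mathbb Q\cap I$, to the top point $\omega^\lambda$ of $[0,\omega^\lambda]$. The iterates are then $F^\lambda_\alpha\cup I$; they decrease strictly for $\alpha<\lambda$ and equal the fixed set $I$ from stage $\lambda$ on.

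\textbf{Successor stages.} Your residue heuristic is wrong, so even this case is not calibrated. In a scattered space, a point of rank $\xi$ lies in the closure of the rank-$\eta$ layer for every $\eta<\xi$. For the even-finite-part kernel on $W_\theta=[0,\omega^\theta]$ this gives $T(W_\theta)=W_\theta^{(1)}$: only rank $0$ leaves. The rank-$\omega$ points are limits of rank-$1$ points and survive, so nothing is removed from every $\omega$-block simultaneously. The correct formula is $F_\alpha=W_\theta^{(s_\alpha)}$, with $s_\mu=\mu$ for $\mu$ zero or limit and $s_{\mu+m}=\mu+2m-1$ for $0<m<\omega$. The first stabilization stage is therefore $\lambda+\lceil n/2\rceil+1$ for $\theta=\lambda+n$. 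Your rank $\omega\cdot\gamma$ would give $\omega\cdot\gamma+1$; the right choice for $\gamma=\lambda+k$ with $k\geq1$ is $\theta=\lambda+2(k-1)$.

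\textbf{$\sigma$-compactness.} For $\gamma\geq\omega_1$ the trace $A$ cannot be countable, for two reasons. A dense subset of an ordinal compactum contains every isolated point, so a ``countable cofinal skeleton'' is not dense. Also, a countable kernel has $hL=\omega$, so Theorem~\ref{thm:core-iteration} would force stabilization below $\omega_1$. You therefore need $\sigma$-compactness of the even-finite-part set in $W_\theta$ for arbitrary $\theta$. ``Closed initial segments'' do not give this when $\operatorname{cf}(\theta)>\omega$, since countably many segments $[0,\beta]$ with $\beta<\omega^\theta$ cannot cover. The paper proves it by induction on $\theta$. At limit $\theta$ it uses a closed cofinal set $\{\omega^{\theta_\alpha}:\alpha\leq\operatorname{cf}(\theta)\}$ contained in the kernel, whose complementary intervals have right endpoints of countable cofinality.
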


\begin{proof}
For an ordinal $\theta$, put $W_\theta=[0,\omega^\theta]$ and denote its Cantor--Bendixson derivatives by $W_\theta^{(\xi)}$. The rank of $0$ is $0$, whereas the rank of a positive point is the last exponent in its Cantor normal form. Thus every rank at most $\theta$ occurs, and $W_\theta^{(\theta+1)}=\varnothing$. For $0<\xi\leq\theta$, the derivative $W_\theta^{(\xi)}$ consists of the nonzero multiples of $\omega^\xi$ in $W_\theta$. For every $\xi\leq\theta$, the points of rank exactly $\xi$ are dense in $W_\theta^{(\xi)}$, since they are its isolated points and the space is scattered. Write each rank as $\lambda+n$, where $\lambda$ is zero or a limit ordinal and $n<\omega$. Let $Y_\theta$ consist of the points whose ranks have even finite part. It contains every isolated point of $W_\theta$, so it is dense.

Induction on $\theta$ shows that $Y_\theta$ is $\sigma$-compact. The space $W_0=\{0,1\}$ is finite. If $\theta=\beta+1$, then, apart from $0$ and the top point, $W_\theta$ is the union of the clopen intervals
\[
(\omega^\beta\cdot n,\omega^\beta\cdot(n+1)],\qquad n<\omega.
\]
Each interval is order-homeomorphic to $(0,\omega^\beta]$, with the last Cantor-normal-form exponent preserved. Its trace on $Y_\theta$ is therefore homeomorphic to $Y_\beta\setminus\{0\}$, which is closed since $0$ is isolated in $Y_\beta$. These traces are $\sigma$-compact by induction. Adding back $0$ preserves $\sigma$-compactness, as does adding the top point when $\omega^\theta\in Y_\theta$. This argument also covers $\beta=0$, when the intervals are singletons.

Now let $\theta$ be a nonzero limit ordinal and put $\tau=\operatorname{cf}(\theta)$. Fix a cofinal sequence $(e_\alpha)_{\alpha<\tau}$ in $\theta$ and construct a continuous strictly increasing sequence $(\theta_\alpha)_{\alpha\leq\tau}$ as follows. Set $\theta_0=0$. At a successor stage choose $\theta_{\alpha+1}$ above both $\theta_\alpha$ and $e_\alpha$ with positive even finite part. Such a choice is always possible. If $\eta<\theta$ and $\eta=\lambda+n$ with $\lambda$ zero or limit and $n<\omega$, take $\zeta=\eta+1$ when $n$ is odd and $\zeta=\eta+2$ when $n$ is even. Then $\eta<\zeta<\theta$ because $\theta$ is limit, and $\zeta$ has positive even finite part. At a nonzero limit index $\delta<\tau$, set $\theta_\delta=\sup_{\alpha<\delta}\theta_\alpha$, which is still below $\theta$ because $\delta<\operatorname{cf}(\theta)$. Set $\theta_\tau=\theta$. Since $\theta_{\alpha+1}>e_\alpha$ for every $\alpha<\tau$, the sequence below $\tau$ is cofinal in $\theta$, so this choice makes the sequence continuous at $\tau$. The set
\[
C=\{\omega^{\theta_\alpha}:\alpha\leq\tau\}
\]
is compact and therefore closed in $W_\theta$. It is the image of the compact ordinal interval $[0,\tau]$ under the continuous map $\alpha\mapsto\omega^{\theta_\alpha}$. Continuity follows from continuity of the sequence $(\theta_\alpha)_{\alpha\leq\tau}$ and of ordinal exponentiation $\beta\mapsto\omega^\beta$ at limit ordinals. Also $C\subseteq Y_\theta$. Here $\theta_0=0$, successor-indexed exponents have even finite part by construction, and nonzero limit-indexed exponents are limit ordinals and hence have finite part $0$. The complement $W_\theta\setminus C$ consists of the point $0$ together with the convex open intervals between consecutive members of $C$. Let $\mathcal J$ denote the family of these intervals. If $J=(a,b)\in\mathcal J$, then its right endpoint has the form $b=\omega^\beta$, where $\beta<\theta$ is a successor ordinal, so $\operatorname{cf}(b)=\omega$. Choose an increasing sequence $(c_m)_{m<\omega}$ cofinal in $b$, with $c_m\geq a+1$ for every $m<\omega$. Then
\[
J=\bigcup_{m<\omega}[a+1,c_m]
\]
is $F_\sigma$ in $W_\beta$. Since $Y_\theta\cap W_\beta=Y_\beta$, the set $Y_\theta\cap J=Y_\beta\cap J$ is $F_\sigma$ in the $\sigma$-compact space $Y_\beta$, and hence is $\sigma$-compact. For each $J\in\mathcal J$, choose an increasing sequence $(L_{J,n})_{n<\omega}$ of compact sets whose union is $Y_\theta\cap J$. Since $0\in Y_\theta$, for $n<\omega$ put
\[
K_n=C\cup\{0\}\cup\bigcup_{J\in\mathcal J} L_{J,n}.
\]
Each $L_{J,n}$ is compact, hence closed in $W_\theta$. Since $W_\theta\setminus(C\cup\{0\})=\bigcup_{J\in\mathcal J}J$, one has $W_\theta\setminus K_n=\bigcup_{J\in\mathcal J}(J\setminus L_{J,n})$, which is open. Hence $K_n$ is closed and therefore compact. The inclusions $C\cup\{0\}\subseteq Y_\theta$ and $L_{J,n}\subseteq Y_\theta$, together with $\bigcup_{n<\omega}L_{J,n}=Y_\theta\cap J$ for each $J\in\mathcal J$, give $K_n\subseteq Y_\theta$ and $Y_\theta=\bigcup_{n<\omega}K_n$, completing the induction. Thus $Y_\theta$ is $\sigma$-compact, hence Lindel\"of $\Sigma$. Since $W_\theta$ is compact, $W_\theta\setminus U$ is compact for every open neighborhood $U$ of $Y_\theta$ in $W_\theta$. Therefore $Y_\theta$ is a Lindel\"of $\Sigma$ kernel of $W_\theta$. For each $\theta$, let $T_\theta$ denote the operator from Definition~\ref{def:core-operator} associated with the pair $(W_\theta,Y_\theta)$, and let $(F^\theta_\alpha)$ denote the corresponding iteration from Theorem~\ref{thm:core-iteration}, with $F^\theta_0=W_\theta$.

Fix $\xi<\theta$. If the finite part of $\xi$ is even, $Y_\theta\cap W_\theta^{(\xi)}$ contains the dense rank-$\xi$ layer, so its closure is $W_\theta^{(\xi)}$. The inclusion $W_\theta^{(\xi)}\setminus Y_\theta\subseteq W_\theta^{(\xi+1)}$ holds, and the left-hand side contains the dense rank-$(\xi+1)$ layer, so its closure is $W_\theta^{(\xi+1)}$. If the finite part of $\xi$ is odd, $Y_\theta\cap W_\theta^{(\xi)}\subseteq W_\theta^{(\xi+1)}$ contains the dense rank-$(\xi+1)$ layer, so its closure is $W_\theta^{(\xi+1)}$. After removing $Y_\theta$, the remaining set is contained in $W_\theta^{(\xi+2)}$. If $\xi+1<\theta$, it contains the dense rank-$(\xi+2)$ layer, so its closure is $W_\theta^{(\xi+2)}$. If $\xi+1=\theta$, then $\theta$ has even finite part, so $W_\theta^{(\xi+1)}=\{\omega^\theta\}\subseteq Y_\theta$ and the remaining set is empty, again equal to $W_\theta^{(\xi+2)}$. Therefore, for $\xi<\theta$,
\[
T_\theta(W_\theta^{(\xi)})=
\begin{cases}
W_\theta^{(\xi+1)},&\text{if the finite part of $\xi$ is even},\\
W_\theta^{(\xi+2)},&\text{if the finite part of $\xi$ is odd}.
\end{cases}
\]
For $\theta>0$, the last nonempty derivative is the singleton $\{\omega^\theta\}$. If $\omega^\theta\notin Y_\theta$, the inner closure in the definition of $T_\theta$ is empty. If $\omega^\theta\in Y_\theta$, that inner closure is the singleton itself and becomes empty after subtracting $Y_\theta$. Thus $T_\theta(\{\omega^\theta\})=\varnothing$ in either case. This is the endpoint step $\xi=\theta$. For $\theta=0$, both points belong to $Y_0$, and $F^0_1=\varnothing$.

Define $s_0=0$, let $s_\mu=\mu$ for nonzero limit ordinals $\mu$, and set
\[
s_{\mu+m}=\mu+(2m-1)
\quad(0<m<\omega),
\]
where $\mu$ is zero or a limit ordinal. With $W_\theta^{(\xi)}=\varnothing$ for $\xi>\theta$ and $T_\theta(\varnothing)=\varnothing$, a transfinite induction using the displayed formula for $T_\theta$ together with the endpoint calculation gives $F^\theta_\alpha=W_\theta^{(s_\alpha)}$. At a nonzero limit stage $\nu$, for each $\alpha<\nu$ write $\alpha=\mu+m$ with $\mu$ zero or limit and $m<\omega$. Then $\alpha\leq s_\alpha$, while $s_\alpha<\nu$ because $s_\alpha$ differs from $\mu$ by only a finite ordinal and $\nu$ is limit. Hence $\sup_{\alpha<\nu}s_\alpha=\nu$. Continuity of the Cantor--Bendixson derivatives then gives the limit step. These derivatives decrease strictly while nonempty. Write $\theta=\lambda+n$ with $\lambda$ zero or a limit ordinal and $n<\omega$, and put $\delta=\lambda+\lceil n/2\rceil$. If $n=0$, then $s_\delta=\theta<s_{\delta+1}$. If $n=2r>0$, then $s_\delta=\lambda+2r-1<\theta<s_{\delta+1}=\lambda+2r+1$. If $n=2r+1$, then $s_\delta=\theta<s_{\delta+1}$. Hence in all cases $s_\delta\leq\theta<s_{\delta+1}$, so $F^\theta_\delta\neq\varnothing$ while $F^\theta_{\delta+1}=F^\theta_{\delta+2}=\varnothing$. Since no equality occurs while the derivatives are nonempty, the first stabilization stage is
\[
\gamma=\delta+1=\lambda+\lceil n/2\rceil+1,
\]
and the terminal core is empty. Every nonzero successor ordinal can be written uniquely as $\gamma=\lambda+k$, where $\lambda$ is zero or a limit ordinal and $1\leq k<\omega$. It is obtained by taking $\theta=\lambda+2(k-1)$.

To obtain a nonzero limit stage $\gamma=\lambda$, let $q:W_\lambda\sqcup I\to Q_\lambda$, where $I=[0,1]$, be the quotient map that sends the top point $p=\omega^\lambda$ and $0\in I$ to the same point and is one-to-one elsewhere. The disjoint union $W_\lambda\sqcup I$ is compact Hausdorff. The quotient relation is the diagonal together with the two off-diagonal pairs joining $p$ and $0\in I$, and it is closed. Therefore $Q_\lambda$ is compact Hausdorff. The restrictions of $q$ to the compact summands are continuous injections and hence homeomorphisms onto closed images. Throughout this paragraph, the symbols $W_\lambda$, $I$, and $Y_\lambda$ denote these images in $Q_\lambda$. The common image of the original point $p$ and the endpoint $0$ of the interval is still denoted by $p$, and $\mathbb Q\cap I$ denotes the image of the rational points of the interval. Put
\[
Y_\lambda^*=Y_\lambda\cup(\mathbb Q\cap I).
\]
Since $Y_\lambda$ is dense and $\sigma$-compact in $W_\lambda$ and $\mathbb Q\cap I$ is countable and dense in $I$, their union $Y_\lambda^*$ is dense and $\sigma$-compact in $Q_\lambda$, hence Lindel\"of $\Sigma$. If $U$ is any open neighborhood of $Y_\lambda^*$ in $Q_\lambda$, then $Q_\lambda\setminus U$ is compact and therefore Lindel\"of $\Sigma$. Thus $Y_\lambda^*$ is a Lindel\"of $\Sigma$ kernel of $Q_\lambda$. Let $\widetilde T_\lambda$ be the operator from Definition~\ref{def:core-operator} associated with $(Q_\lambda,Y_\lambda^*)$, and let $(\widetilde F^\lambda_\alpha)$ be its iteration from Theorem~\ref{thm:core-iteration}. Since $Y_\lambda^*\cap I=\mathbb Q\cap I$ and both this set and its complement are dense in $I$, Proposition~\ref{prop:core-fixed-points} gives $\widetilde T_\lambda(I)=I$. Since the top point $p=\omega^\lambda$ has rank $\lambda$ and $\lambda$ is limit, $p\in Y_\lambda$. The summand images meet only at $p$, so $Y_\lambda^*\cap W_\lambda=Y_\lambda$. As $W_\lambda$ is closed in $Q_\lambda$, closures of its subsets are unchanged. Hence $\widetilde T_\lambda$ restricts to $T_\lambda$ on closed subsets of $W_\lambda$. The finite-union clause of Proposition~\ref{prop:core-fixed-points} gives, for closed sets $A,B\subseteq Q_\lambda$,
\[
\widetilde T_\lambda(A\cup B)=\widetilde T_\lambda(A)\cup\widetilde T_\lambda(B).
\]
For any nonempty index set $\Lambda$ and any family $(A_\alpha)_{\alpha\in\Lambda}$ of subsets of $Q_\lambda$,
\[
\bigcap_{\alpha\in\Lambda}(A_\alpha\cup I)=\left(\bigcap_{\alpha\in\Lambda} A_\alpha\right)\cup I.
\]
A transfinite induction therefore gives
\[
\widetilde F^\lambda_\alpha=F^\lambda_\alpha\cup I
\qquad\text{for every ordinal }\alpha.
\]
The base step is $Q_\lambda=W_\lambda\cup I$. The successor step uses the finite-union identity together with $\widetilde T_\lambda(I)=I$ and the restriction to $T_\lambda$, while the nonzero-limit step uses the displayed intersection identity.
For every $\alpha<\lambda$, one has $s_\alpha,s_{\alpha+1}<\lambda$, so the point $p$, whose rank is $\lambda$, belongs to both $F^\lambda_\alpha$ and $F^\lambda_{\alpha+1}$. Hence the strict decrease $F^\lambda_\alpha\supsetneq F^\lambda_{\alpha+1}$ removes a point of $W_\lambda\setminus\{p\}$, and therefore $\widetilde F^\lambda_\alpha\ne\widetilde F^\lambda_{\alpha+1}$. At stage $\lambda$, one has $F^\lambda_\lambda=\{p\}$, so $\widetilde F^\lambda_\lambda=I$, while $F^\lambda_{\lambda+1}=\varnothing$ gives $\widetilde F^\lambda_{\lambda+1}=I$. Hence the first stabilization stage is exactly $\lambda$. Stage zero is realized by $I$ with kernel $\mathbb Q\cap I$.

For $\theta>0$,
\[
|Y_\theta|=|W_\theta|=\max\{\omega,|\theta|\}:
\]
finite Cantor normal forms give the upper bound, while the points $\omega^\xi$ for $\xi<\theta$ and the finite ordinals give the lower bound. The successor ordinals below $\omega^\theta$ are isolated and already have cardinality $|W_\theta|$. Since $\theta>0$, the ordinal $\omega^\theta$ is limit, and $\alpha\mapsto\alpha+1$ injects $\omega^\theta$ into this set. Hence $Y_\theta$, which contains all isolated points, has the same cardinality as $W_\theta$. Thus, for $\gamma<\kappa^+$, the preceding constructions use kernels of size at most $\kappa$. The interval in the limit construction adds only countably many kernel points. If $\gamma\geq1$, add a disjoint clopen compact component $[0,\kappa]$ and include it entirely in the kernel. On the original clopen component, the closed-core operator for the enlarged space restricts to the original operator, since closures of subsets of that component are unchanged and the enlarged kernel has the same trace there. The finite-union clause of Proposition~\ref{prop:core-fixed-points} then shows that the enlarged operator acts componentwise on the disjoint clopen sum. The added component, being contained in the kernel, therefore disappears at the first step. From stage $1$ onward the iterates agree with those of the original construction. Since $\gamma\geq1$, the first stabilization stage remains $\gamma$, and the kernel now has size exactly $\kappa$. If $\gamma=0$, use the original interval $I=[0,1]$ and set
\[
X=I\times[0,\kappa],\qquad
Y=(\mathbb Q\cap I)\times[0,\kappa].
\]
Then $Y$ is $\sigma$-compact of size $\kappa$, and both $Y$ and $X\setminus Y$ are dense in $X$. If $T$ denotes the operator associated with this kernel, Proposition~\ref{prop:core-fixed-points} gives $T(X)=X$, so the first stabilization stage is $0$. In both constructions, $Y$ contains a discrete subspace of size $\kappa$, namely the successor ordinals in the added component when $\gamma\geq1$ and $\{0\}\times\{\alpha+1:\alpha<\kappa\}$ when $\gamma=0$. The singleton cover of this discrete subspace has no subcover of size below $\kappa$, so $\kappa\leq hL(Y)\leq|Y|=\kappa$. In each construction the resulting $Y$ is $\sigma$-compact and hence Lindel\"of $\Sigma$. Since the ambient space is compact, the complement of every open neighborhood of $Y$ is compact. Therefore each resulting $Y$ is a Lindel\"of $\Sigma$ kernel of its ambient space.
\end{proof}

\section{The remaining problem}

By Theorem~\ref{thm:exact-criterion}, the remaining question is whether, for every charming space $X$, every Lindel\"of $\Sigma$ kernel $Y$ of $X$, and every neighborhood assignment $\varphi$ on $X$, there is a set $E\subseteq X$ that is closed discrete in $X$ and satisfies
\[
Y\subseteq\varphi[E].
\]
For a fixed charming space $X$ and neighborhood assignment $\varphi$ on $X$, Theorem~\ref{thm:closed-LSigma-kernel} gives a countable assignment kernel with Lindel\"of $\Sigma$ closure, and Corollary~\ref{cor:two-step} gives a closed discrete set $D$ with
\[
\varphi[\varphi[D]]=X.
\]
The classical $z$-space obstruction, together with Corollary~\ref{cor:no-square-refinement}, Theorem~\ref{thm:closed-lift}, and Corollary~\ref{cor:no-closed-lift}, shows that the extra iteration cannot in general be removed by a universal pointwise square refinement, even after passage through a continuous closed surjection. The closed assignment-kernel construction and this square-refinement obstruction therefore isolate Theorem~\ref{thm:exact-criterion}\textup{(ii)} as the unresolved one-step problem.

On the counterexample side, Theorem~\ref{thm:hfcw-charming}\textup{(iii)--(iv)} shows that every uncountable $\mathrm{HFC}_{\mathrm w}$ subspace of $[\omega_1]^{\leq\omega}$ is dually closed countable but not charming. In particular, Corollary~\ref{cor:tsaban-not-charming} applies this conclusion to Tsaban's version~2 $\diamondsuit$ construction. Thus dually closed countability alone does not decide the one-step problem. Corollary~\ref{cor:terminal-counterexample} gives the complementary reduction on the charming side. Every charming non-$D$ space has a nonempty closed charming terminal core $F_*$ for which both $Y\cap F_*$ and $F_*\setminus Y$ are dense in $F_*$, and this core is itself non-$D$. Consequently, every charming space is a $D$-space if and only if every terminal core arising from the closed-core iteration is a $D$-space.

\Needspace{10\baselineskip}

\end{document}